%%%%%%%%%%%% AMS-LaTeX %%%%%%%%%

\documentclass[12pt]{amsart}

\usepackage{mathrsfs,amssymb}

\usepackage{mathabx}

\textheight 210mm

\newtheorem{theorem}{Theorem}[section]
\newtheorem{lemma}[theorem]{Lemma}

\theoremstyle{definition}
\newtheorem{definition}[theorem]{Definition}

\newtheorem{con}[theorem]{Conjecture}

\theoremstyle{remark}
\newtheorem{remark}[theorem]{Remark}

\numberwithin{equation}{section}

\let \la=\lambda
\let \e=\varepsilon

\let \a=\alpha
\let \f=\varphi
\let \b=\beta

\let \O=\Omega
\let \si=\sigma

\let \ga=\gamma
\let \D=\Delta

\let \l=\langle
\let \r=\rangle

\begin{document}
\title[Quantitative weighted estimates]
{Quantitative weighted estimates for the Littlewood-Paley square function and Marcinkiewicz multipliers}

\author{Andrei K. Lerner}
\address{Department of Mathematics,
Bar-Ilan University, 5290002 Ramat Gan, Israel}
\email{lernera@math.biu.ac.il}

\thanks{The author was supported by ISF grant No. 447/16 and ERC Starting Grant No. 713927.}

\begin{abstract}
Quantitative weighted estimates are obtained for the Littlewood-Paley square function $S$ associated with a lacunary decomposition of ${\mathbb R}$ and for the Marcinkiewicz multiplier operator.
In particular, we find the sharp dependence on $[w]_{A_p}$ for the $L^p(w)$ operator norm of $S$ for $1<p\le 2$.
\end{abstract}

\keywords{Square function, Marcinkiewicz multipliers, weighted norm inequalities.}
\subjclass[2010]{42B20, 42B25}

\maketitle

\section{Introduction}
Given a weight $w$ (i.e., a non-negative locally integrable function on~${\mathbb R}^n$), we say that $w\in A_p, 1<p<\infty,$ if
$$[w]_{A_p}=\sup_Q\,\l w\r_Q\l w^{1-p'}\r_Q^{p-1}<\infty,$$
where the supremum is taken over all cubes $Q\subset {\mathbb R}^n$ and $\langle \cdot\rangle_Q$ is the integral mean over $Q$.

In the recent decade, it has been of great interest to obtain the $L^p(w)$ operator norm estimates (possibly optimal) in terms of $[w]_{A_p}$
for the different operators in harmonic analysis. In particular, it was established that the $L^p(w)$ operator norms for Calder\'on-Zygmund and a large class of Littlewood-Paley
operators are bounded by a multiple of $[w]_{A_p}^{\max\big(1,\frac{1}{p-1}\big)}$ and $[w]_{A_p}^{\max\big(\frac{1}{2},\frac{1}{p-1}\big)}$, respectively, and these bounds are sharp for all $1<p<\infty$
(see \cite{P,H,CMP,L1}).

On the other hand, there are still a number of operators for which the sharp bounds in terms of $[w]_{A_p}$ are not known yet. For example, for rough homogeneous singular integrals $T_{\O}$ with angular part $\O\in L^{\infty}$
the currently best known result says that $\|T_{\O}\|_{L^2(w)\to L^2(w)}$ is at most a multiple of $[w]_{A_2}^2$, and it is an open question whether this bound is sharp (see
\cite{CCPO,HRT,L2}). Several other examples are the main objects of the present paper.

We consider the classical Littlewood-Paley square function associated with a lacunary decomposition of ${\mathbb R}$ and the Marcinkiewicz multiplier operator.
Recall the definitions of these objects. For $k\in {\mathbb Z}$ set $\D_k=(-2^{k+1},-2^k]\cup [2^k,2^{k+1}).$
The Littlewood-Paley square function we shall deal with is defined by
$$Sf=\left(\sum_{k\in {\mathbb Z}}|S_{\D_k}f|^2\right)^{1/2},$$
where $\widehat{S_{\D_k}f}=\widehat{f}\chi_{\D_k}.$ We say that $T_m$ is the Marcinkiewicz multiplier operator if $\widehat{T_mf}=m\widehat{f}$, where $m\in L^{\infty}$ and
$$\sup_{k\in {\mathbb Z}}\int_{\D_k}|m'(t)|dt<\infty.$$

The fact that $S$ and $T_m$ are bounded on $L^p(w)$ for $w\in A_p$ is well known and due to D. Kurtz \cite{K}. Tracking the dependence on $[w]_{A_p}$ in the known proofs yields, for example, that
the $L^2(w)$ operator norms of $S$ and $T_m$ are bounded by a multiple of $[w]_{A_2}^2$ and $[w]_{A_2}^4$, respectively.

In this paper we give new proofs of the $L^p(w)$ boundedness of $S$ and~$T_m$ providing better quantitative estimates it terms of $[w]_{A_p}$. Our main results are the following.

\begin{theorem}\label{lp}
If $\a_p$ is the best possible exponent in
$$\|S\|_{L^p(w)\to L^p(w)}\le C_p[w]_{A_p}^{\a_p},$$
then
$$\max\left(1,\frac{3}{2}\frac{1}{p-1}\right)\le \a_p\le \frac{1}{2}\frac{1}{p-1}+\max\left(1,\frac{1}{p-1}\right)\quad(1<p<\infty);$$
in particular, $\a_p=\frac{3}{2}\frac{1}{p-1}$ for $1<p\le 2$.
\end{theorem}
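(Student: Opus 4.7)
The plan is to separate the upper and lower bounds, and for the upper bound to split the regimes $1<p\le 2$ and $p>2$ according to which term in $\max(1,\tfrac{1}{p-1})$ dominates.

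For the upper bound I would first aim for the sharp $L^2(w)$ estimate $\|Sf\|_{L^2(w)}\le C[w]_{A_2}^{3/2}\|f\|_{L^2(w)}$, from which the case $1<p<2$ follows by Rubio de Francia sharp extrapolation, giving exactly $\a_p\le 3/(2(p-1))$. I would derive the $L^2$ estimate from a sparse bilinear form bound for $S$ viewed as an $\ell^2$-valued operator $\vec Sf=(S_{\D_k}f)_k$: schematically,
$$|\l Sf,g\r|\le C\sup_{\mathcal S}\sum_{Q\in\mathcal S}|Q|\l|f|\r_Q\l g\r_{r,Q}$$
for a sparse family $\mathcal S$ and some $r>1$ encoding the $\ell^2$-structure of the square function, from which the quantitative weighted theory of $(1,r)$-sparse forms produces the $[w]_{A_2}^{3/2}$ constant. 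For $p>2$ I would proceed directly by duality: write $\|Sf\|_{L^p(w)}^2=\|(Sf)^2\|_{L^{p/2}(w)}$, dualize against $g\in L^{(p/2)'}(w^{1-(p/2)'})$, and bound $\sum_k\int|S_{\D_k}f|^2 g w\,dx$ via a sparse form applied to each scalar piece; the factor $\tfrac{1}{2(p-1)}$ comes from the change of exponent $p\mapsto p/2$, and the $\max(1,\tfrac{1}{p-1})$ from the Calder\'on-Zygmund-type bound for each $S_{\D_k}$.

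For the lower bound I would test on power weights. For $1<p<5/2$, take $w_\d(x)=|x|^{(p-1)(1-\d)}$, for which $[w_\d]_{A_p}\sim\d^{-1}$, together with $f=\chi_{[1,2]}$; the lacunary structure of the $\D_k$ guarantees $|S_{\D_k}f(x)|\asymp|x|^{-1}$ for roughly $\log_2(1/|x|)$ many indices $k$ when $|x|$ is small, so $Sf(x)\gtrsim|x|^{-1}\bigl(\log(1/|x|)\bigr)^{1/2}$ near the origin. Computing the ratio $\|Sf\|_{L^p(w_\d)}/\|f\|_{L^p(w_\d)}$ picks up an extra $\d^{-1/2}$ beyond the Hilbert transform's $\d^{-1/(p-1)}$, yielding $\a_p\ge 3/(2(p-1))$. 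The baseline $\a_p\ge 1$ for all $p$ follows from a simpler translation/dilation-based test using the fact that a single $S_{\D_k}$ behaves like a truncated Hilbert transform.

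The main obstacle will be the sparse bilinear estimate for $S$: the multipliers $\chi_{\D_k}$ produce kernels that lack compact support and only satisfy H\"ormander-type regularity, so the standard Lacey/Lerner stopping-time argument for Calder\'on-Zygmund operators does not apply verbatim. I expect the crux of the proof to lie in combining a smooth Littlewood-Paley decomposition adapted to $\{\D_k\}$ with a vector-valued sparse domination technique, carefully tracking the $\ell^2$-norm of the pieces through the stopping-time argument to obtain the correct exponent $r$ in the $(1,r)$-sparse form.
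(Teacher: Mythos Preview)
Your upper-bound strategy has a genuine gap at precisely the step you flag as the ``main obstacle.'' You propose a $(1,r)$-sparse bilinear bound for $S$ with an unspecified $r>1$ and offer only the expectation that a vector-valued stopping-time argument will produce it; but the multipliers $\chi_{\D_k}$ give kernels with mere $|x|^{-1}$ decay and no H\"older regularity, so the standard Lacey/Lerner machinery does not apply, and you supply no substitute. The paper itself (Section~5) records that it is unclear which sparse bound for $S$ would even be the right target, and that the most natural candidate, $|\langle Sf,g\rangle|\le C(r-1)^{-1/2}\sum_{Q}\langle f\rangle_{r,Q}\langle g\rangle_{1,Q}|Q|$ for $1<r\le 2$, remains conjectural and would in any case only reproduce the upper bound of Theorem~\ref{lp} after optimizing in $r$. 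So the core of your plan is an open problem rather than a proof.

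The paper's argument is entirely different and avoids sparse domination. It dualizes $S$ to the vector-valued map $\{\psi_k\}\mapsto\sum_k S_{\D_k}\psi_k$, applies a discrete Wilson/Chang--Wilson--Wolff inequality (Theorem~\ref{discr}) to extract the factor $[w]_{A_p}^{1/2}$, and is then left with bounding the smooth square function $S_{\f,{\mathscr D}}\big(\sum_k S_{\D_k}\psi_k\big)$ in $L^2(w)$. The compact Fourier support of $\f_{2^{-k}}$ collapses the sum to adjacent scales, and each remaining piece is controlled by a two-weight multiplier estimate (Lemma~\ref{mes}) with dyadically averaged weights $w_{k,{\mathscr D}}$, whose $A_2$ constants are handled by Lemma~\ref{a2}; sharp extrapolation (Theorem~\ref{shext}) then gives the full range of $p$ with no case split at $p=2$. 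For the lower bound the paper invokes the Luque--P\'erez--Rela principle combined with Bourgain's unweighted asymptotics $\|S\|_{L^p\to L^p}\simeq(p-1)^{-3/2}$ and $\simeq p$, rather than testing on power weights; your direct test is plausible in spirit, but the claimed pointwise behaviour of $S_{\D_k}\chi_{[1,2]}$ near the origin would itself require justification.
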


\begin{theorem}\label{mm}
If $\b_p$ is the best possible exponent in
$$\|T_m\|_{L^p(w)\to L^p(w)}\le C_{p,m}[w]_{A_p}^{\b_p},$$
then
$$\frac{3}{2}\max\left(1,\frac{1}{p-1}\right)\le \b_p\le \frac{p'}{2}+\max\left(1,\frac{1}{p-1}\right)\quad(1<p<\infty).$$
\end{theorem}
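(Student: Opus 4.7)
The plan is to prove the lower bound by randomisation and self-duality, and the upper bound by a block integration-by-parts identity reduced to Theorem~\ref{lp}.

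For the \emph{lower bound}, I would apply Khintchine's inequality to the family of ``sign'' Marcinkiewicz multipliers $m_r(\xi) = \sum_{k\in\mathbb{Z}} r_k \chi_{\D_k}(\xi)$ with $r_k\in\{-1,+1\}$. Each $m_r$ has $m_r'\equiv 0$ on every $\D_k$ and so is uniformly Marcinkiewicz; consequently, by the very definition of $\b_p$, $\|T_{m_r}\|_{L^p(w)\to L^p(w)}\lesssim [w]_{A_p}^{\b_p}$ with constant independent of $r$. Since Khintchine yields $\|Sf\|_{L^p(w)}\lesssim \sup_r \|T_{m_r}f\|_{L^p(w)}$, one concludes $\a_p\le\b_p$, and Theorem~\ref{lp} then gives $\b_p\ge 3/(2(p-1))$ for $1<p\le 2$. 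For $p\ge 2$ I would invoke self-duality: $\|T_m\|_{L^p(w)\to L^p(w)} = \|T_{\overline m}\|_{L^{p'}(\si)\to L^{p'}(\si)}$ with $\si = w^{1-p'}$ and $[\si]_{A_{p'}} = [w]_{A_p}^{p'-1}$, so $\b_p\ge (p'-1)\b_{p'}\ge (p'-1)\cdot 3/(2(p'-1)) = 3/2$.

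For the \emph{upper bound}, self-duality again reduces matters to $1<p\le 2$, where the target is $\b_p\le p'/2 + 1/(p-1)$. The structural ingredient is the block integration-by-parts identity: with $m_k^+ = m\chi_{[2^k, 2^{k+1})}$,
$$T_{m_k^+}f = m(2^{k+1})\,S_{\D_k^+}f - \int_{2^k}^{2^{k+1}} m'(s)\,S_{[2^k,s]}f\,ds,$$
and analogously on the negative half of $\D_k$. Pairing with a test function $g$ of unit norm in $L^{p'}(\si)$ and summing over $k$ decomposes $\l T_m f, g\r$ into a ``boundary'' piece (a step-function Marcinkiewicz pairing $\sum_k c_k\l S_{\D_k}f, S_{\D_k}g\r$, controlled via pointwise $\ell^2$-Cauchy--Schwarz by $\int Sf\cdot Sg$) plus an ``integral'' piece $\sum_k \int_{\D_k} m'(s)\l S_{I_k(s)}f, S_{I_k(s)}g\r ds$ with $I_k(s)\subset\D_k$. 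After the substitution $s = 2^k\tau$, $\tau\in[1,2]$, the integral piece re-indexes as a $\tau$-average of pairings of shifted Littlewood--Paley square functions $(\sum_k |S_{[2^k\tau, 2^{k+1}]} f|^2)^{1/2}$; Fubini, the Marcinkiewicz condition $\sup_k\int_{\D_k}|m'|<\infty$ and Cauchy--Schwarz bound this also by $\|Sf\|_{L^p(w)}\|Sg\|_{L^{p'}(\si)}$. Applying Theorem~\ref{lp} in $L^p(w)$ to $f$ and in $L^{p'}(\si)$ to $g$ then finishes the estimate.

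The principal \emph{obstacle} is that this naive execution gives the exponent $\a_p + (p'-1)\a_{p'} = (p+4)/(2(p-1))$, which overshoots the target $(p+2)/(2(p-1))$ by exactly $1/(p-1)$. To recover the sharp exponent, one of the two square-function factors must be used more economically: for instance, by replacing the bound for $\|Sg\|_{L^{p'}(\si)}$ with an asymmetric sparse estimate of the form $|\l T_m f, g\r|\lesssim \sum_{Q\in\mathcal{S}}\l Sf\r_Q \l g\r_Q|Q|$, from which the $A_p$--sparse calculus produces the claimed exponent (the factor $p'/2$ arising from the $p'$-side of the sparse bookkeeping, and the $\max(1,1/(p-1))$ being the standard cost of the $A_p$ sparse bound). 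Establishing such an asymmetric sparse domination---most plausibly via the integration-by-parts identity combined with a stopping-time argument calibrated to the Marcinkiewicz condition---is the technically delicate point of the proof.
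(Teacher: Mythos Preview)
Your lower-bound argument is correct and rather different from the paper's. The paper simply invokes the Tao--Wright unweighted endpoint asymptotics $\|T_m\|_{L^p\to L^p}\simeq \max(p,p')^{3/2}$ together with the Luque--P\'erez--Rela extrapolation principle. Your Khintchine route instead transfers the lower bound from Theorem~\ref{lp} via $\a_p\le\b_p$; this is valid provided one reads $C_{p,m}$ as depending on $m$ only through its Marcinkiewicz norm (so that the constant is uniform over the sign sequences $r$), which is the standard convention.

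For the upper bound, however, there is a genuine gap. Your integration-by-parts/Cauchy--Schwarz scheme bounds $|\langle T_mf,g\rangle|$ by $\|Sf\|_{L^p(w)}\|Sg\|_{L^{p'}(\si)}$, and as you correctly compute this yields exponent $\frac{p+4}{2(p-1)}$, not the claimed $\frac{p+2}{2(p-1)}$. Your proposed repair is an asymmetric sparse domination of the form $|\langle T_mf,g\rangle|\lesssim\sum_{Q\in{\mathcal S}}\langle Sf\rangle_Q\langle g\rangle_Q|Q|$, but you do not establish it; in fact the paper explicitly discusses sparse bounds for $T_m$ in Section~5.2 as an \emph{open} approach, conjecturing (\ref{ssb}) as the relevant sparse estimate and noting that it would give the stronger Conjecture~\ref{cmm}. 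So as written your upper bound is incomplete.

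The paper's actual argument avoids sparse domination entirely and is structurally quite different. After the self-duality reduction to $1<p\le 2$, it applies the discrete Wilson inequality (Theorem~\ref{discr}) \emph{twice}: once to $T_mf$ to reduce matters to the $L^2(w)$ bound $\|S_{\f,{\mathscr D}}(T_mf)\|_{L^2(w)}\le C_m[w]_{A_2}^{3/2}\|f\|_{L^2(w)}$, and then again (after dualising this square-function estimate) to the resulting vector-valued sum $\sum_k(T_m\psi_k)*\f_{2^{-k}}$. Each application costs a factor $[w]_{A_2}^{1/2}$. What remains is a family of \emph{two-weight} $L^2$ estimates of the form
\[
\|(T_mf)*\f_{2^{-j}}*\f_{2^{-k}}\|_{L^2((w^{-1})_{k,{\mathscr D}'})}\le C_m[w]_{A_2}\|f\|_{L^2((w_{k,{\mathscr D}})^{-1})},
\]
where $w_{k,{\mathscr D}}$ denotes the dyadic average of $w$ at scale $2^{-k}$. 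These are handled by the paper's two technical lemmas: Lemma~\ref{a2}, which shows that dyadic averaging preserves $A_2$ and the mixed two-weight constant up to absolute factors, and Lemma~\ref{mes}, which is a two-weight $L^2$ estimate for $T_{m\chi_{[a,b]}}$ with an explicit dependence $[u,v]_{A_2}^{1/2}([u]_{A_2}^{1/2}+[v]_{A_2}^{1/2})$. It is precisely this two-weight mechanism, rather than any sparse bound, that recovers the missing $\frac{1}{p-1}$ you identified.
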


Observe that the lower bounds for $\a_p$ and $\b_p$ are immediate consequences of several known results. By a general extrapolation argument due to T. Luque, C. P\'erez and E. Rela \cite{LPR},
if an operator $T$ is such that its unweighted $L^p$ norms satisfy $\|T\|_{L^p\to L^p}\simeq \frac{1}{(p-1)^{\ga_1}}$ as $p\to 1$ and $\|T\|_{L^p\to L^p}\simeq p^{\ga_2}$ as $p\to \infty$, then the best
possible exponent $\xi_p$ in $\|T\|_{L^p(w)\to L^p(w)}\le C[w]_{A_p}^{\xi_p}$ satisfies $\xi_p\ge\max(\ga_2,\frac{\ga_1}{p-1})$. Therefore, the lower bounds for $\a_p$ and $\b_p$ follow from the sharp
unweighted behavior of the $L^p$ norms of $S$ and $T_m$.

Such a behavior for $S$ was found by J. Bourgain \cite{B}:
\begin{equation}\label{as}
\|S\|_{L^p\to L^p}\simeq \frac{1}{(p-1)^{3/2}}\,\,\text{as}\,\, p\to 1\,\,\text{and}\,\, \|S\|_{L^p\to L^p}\simeq p\,\,\text{as}\,\, p\to \infty,
\end{equation}
which implies the lower bound for $\a_p$. These asymptotic relations were obtained in \cite{B} for the circle version of the Littlewood-Paley square function but the
arguments can be transferred to the real line version in a straightforward way. An alternative proof of the first asymptotic relation in (\ref{as}) has been recently found by O. Bakas \cite{Ba}.

The sharp unweighted $L^p$ norm behavior of $T_m$ is due to T. Tao and J. Wright~\cite{TW}:
$$\|T_m\|_{L^p\to L^p}\simeq \max(p,p')^{3/2}\quad(1<p<\infty),$$
which implies the lower bound for $\b_p$.

Bourgain's proof \cite{B} of the first relation in (\ref{as}) was based on a dual restatement in terms of the vector-valued operator
$\sum_{k\in {\mathbb Z}}S_{\D_k}\psi_k$ with its subsequent handling by means of the Chang-Wilson-Wolff inequality~\cite{CWW}.
Our proof of the upper bound for $\a_p$ follows similar ideas but with some modifications.
As the key tool we use Theorem \ref{discr}, which is a discrete analogue of the sharp weighted continuous square function estimate
proved by M.~Wilson~\cite{W1}. Notice that the latter estimate is also based on the Chang-Wilson-Wolff inequality. We mention  that the
sharp $L^2(w)$ bound in Theorem \ref{lp},
$$\|S\|_{L^2(w)\to L^2(w)}\le C[w]_{A_2}^{3/2},$$
by extrapolation yields yet another proof of the unweighted upper bound  $\|S\|_{L^p\to L^p}\le \frac{C}{(p-1)^{3/2}}, 1<p\le 2$ (see Remark \ref{a2b} below).

Another important ingredient used both in the proofs of Theorems~\ref{lp} and \ref{mm} is Lemma \ref{mes}. This lemma establishes a two-weighted estimate
for the multiplier operator $T_{m\chi_{[a,b]}}$. The need to consider two-weighted estimates comes naturally from the method of the proof of Theorem~\ref{mm}.

The paper is organized as follows. Section 2 contains some preliminaries and, in particular, the proof of Theorem \ref{discr}.
In Section 3 we prove two main technical lemmas. The proof of Theorems \ref{lp} and \ref{mm}
is contained in Section 4. In Section 5 we make several conjectures related to the sharp upper bounds for $\a_p$ and $\b_p$.

\section{Preliminaries}
Although the main objects we deal with are defined on ${\mathbb R}$, the results of subsections 2.1, 2.2 and 2.3 are valid on~${\mathbb R}^n$.
\subsection{Dyadic lattices}
The material of this subsection is taken from~\cite{LN}.

Given a cube $Q_0\subset {\mathbb R}^n$, let ${\mathcal D}(Q_0)$ denote the set of all dyadic cubes with respect to $Q_0$, that is, the cubes
obtained by repeated subdivision of $Q_0$ and each of its descendants into $2^n$ congruent subcubes.

\begin{definition}\label{dl}
A dyadic lattice ${\mathscr D}$ in ${\mathbb R}^n$ is any collection of cubes such that
\begin{enumerate}
\renewcommand{\labelenumi}{(\roman{enumi})}
\item
if $Q\in{\mathscr D}$, then each child of $Q$ is in ${\mathscr D}$ as well;
\item
every 2 cubes $Q',Q''\in {\mathscr D}$ have a common ancestor, i.e., there exists $Q\in{\mathscr D}$ such that $Q',Q''\in {\mathcal D}(Q)$;
\item
for every compact set $K\subset {\mathbb R}^n$, there exists a cube $Q\in {\mathscr D}$ containing $K$.
\end{enumerate}
\end{definition}

In order to construct a dyadic lattice ${\mathscr D}$, it suffices to fix an arbitrary cube $Q_0$ and to expand it dyadically (carefully enough in order to cover the whole space)
by choosing one of $2^n$ possible parents for the top cube and including it into ${\mathscr D}$ together with all its dyadic subcubes during each step.
Therefore, given $h>0$, one can choose a dyadic lattice ${\mathscr D}$ such that for any $Q\in {\mathscr D}$ its sidelength $\ell_Q$ will be of the form $2^kh, k\in {\mathbb Z}$.

\begin{theorem}\label{three}{\rm{(The Three Lattice Theorem)}}
For every dyadic lattice~${\mathscr D}$, there exist $3^n$ dyadic lattices ${\mathscr D}^{(1)},\dots,{\mathscr D}^{(3^n)}$ such that
$$\{3Q: Q\in{\mathscr D}\}=\bigcup_{j=1}^{3^n}{\mathscr D}^{(j)}$$
and for every cube $Q\in {\mathscr D}$ and $j=1,\dots,3^n$, there exists a unique cube $R\in {\mathscr D}^{(j)}$ of
sidelength $\ell_{R}=3\ell_Q$ containing $Q$.
\end{theorem}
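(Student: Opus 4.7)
The plan is to exhibit the $3^n$ lattices $\mathscr{D}^{(j)}$ explicitly, indexed by $r_0 \in \{0,1,2\}^n$, and verify Definition \ref{dl}. After an allowable translation, at every level $k\in\mathbb{Z}$ the cubes of $\mathscr{D}$ of sidelength $2^k$ may be written as $Q_{k,m}=2^k(m+[0,1)^n)$, $m\in\mathbb{Z}^n$, so that $3Q_{k,m}=2^k(m-\mathbf{1}+[0,3]^n)$ has sidelength $3\cdot 2^k$. For each $k\in\mathbb{Z}$ and $r\in\{0,1,2\}^n$ I would introduce the tiling
\[
\mathscr{T}_{k,r}=\{2^k(3u+r+[0,3]^n): u\in\mathbb{Z}^n\},
\]
which partitions $\mathbb{R}^n$ into cubes of sidelength $3\cdot 2^k$. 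Splitting the index $m-\mathbf{1}\in\mathbb{Z}^n$ into residue classes mod $3$ componentwise exhibits $\{3Q:Q\in\mathscr{D},\,\ell_Q=2^k\}$ as the disjoint union $\bigcup_{r\in\{0,1,2\}^n}\mathscr{T}_{k,r}$ of $3^n$ tilings.

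To assemble the per-level tilings into dyadic lattices, for each $r_0\in\{0,1,2\}^n$ I would define $r^{(k)}\in\{0,1,2\}^n$ by $r^{(k)}\equiv 2^k r_0\pmod{3}$ componentwise (using that $2$ is invertible modulo $3$) and set
\[
\mathscr{D}^{(r_0)}:=\bigcup_{k\in\mathbb{Z}}\mathscr{T}_{k,\,r^{(k)}}.
\]
The central computation is the nesting step: if $R\in\mathscr{T}_{k+1,\,r^{(k+1)}}$ is split dyadically, each of the $2^n$ sub-cubes has sidelength $3\cdot 2^k$ and, measured in units of $2^k$, left corner congruent to $2\,r^{(k+1)}\pmod 3$. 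Hence each sub-cube lies in $\mathscr{T}_{k,\,r^{(k)}}$ precisely when $r^{(k)}\equiv 2\,r^{(k+1)}\pmod 3$, which is what the recursion enforces.

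Once the nesting is in hand, the three conditions of Definition \ref{dl} are routine: (i) is the nesting just verified; (iii) holds because $\mathscr{T}_{k,\,r^{(k)}}$ is a tiling of $\mathbb{R}^n$, so a single cube at sufficiently large level absorbs any prescribed compact set; (ii) follows from (i) and (iii) by ascending two arbitrary cubes of $\mathscr{D}^{(r_0)}$ to a common level at which both are contained in a single tile. The decomposition $\{3Q:Q\in\mathscr{D}\}=\bigcup_{r_0}\mathscr{D}^{(r_0)}$ then follows because at each level $k$ every $3Q$ belongs to exactly one $\mathscr{T}_{k,r}$, and $r$ determines $r_0$ uniquely via the recursion. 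The asserted uniqueness of $R\in\mathscr{D}^{(j)}$ of sidelength $3\ell_Q$ containing a given $Q$ is immediate because $\mathscr{T}_{k,\,r^{(k)}}$ is a tiling.

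The main obstacle I anticipate is the bookkeeping: tracking left-corner residues modulo $3$ through a dyadic split, and maintaining consistency of the recursion as $k$ ranges over negative integers. Working strictly componentwise and exploiting $2\cdot 2\equiv 1\pmod 3$ should keep this clean, and the initial translation is harmless since the assertions of the theorem are translation-invariant.
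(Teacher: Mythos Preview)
The paper does not give its own proof of Theorem~\ref{three}; the result is quoted from \cite{LN}. Your overall strategy---coloring the tripled cubes at each scale by residues modulo $3$ and threading the colors through scales via the recursion $r^{(k)}\equiv 2\,r^{(k+1)}\pmod 3$---is precisely the idea behind the proof in \cite{LN}, and your nesting computation (the dyadic children of a cube in $\mathscr{T}_{k+1,r}$ land in $\mathscr{T}_{k,2r}$) is correct.

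There is, however, a genuine gap in your reduction step. A dyadic lattice in the sense of Definition~\ref{dl} is \emph{not} in general a translate of the standard lattice. After translating so that $[0,1)^n\in\mathscr{D}$, the cubes at levels $k\le 0$ are forced to be standard, but at each level $k\ge 1$ there are $2^n$ admissible parents for the current top cube, and different choices yield lattices that no single translation can identify with the standard one. Concretely, the level-$k$ cubes are $\{c_k+2^k(m+[0,1)^n):m\in\mathbb{Z}^n\}$ with a level-dependent offset $c_k$ satisfying $c_{k+1}-c_k\in\{0,-2^k\}^n$. Redoing your nesting computation with these offsets, the recursion becomes
\[
r^{(k)}\equiv 2\,r^{(k+1)}-\delta_k\pmod 3,\qquad \delta_k:=(c_k-c_{k+1})/2^k\in\{0,1\}^n,
\]
so your closed formula $r^{(k)}\equiv 2^k r_0$ breaks down as soon as some $\delta_k\ne 0$. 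The fix is easy and does not disturb the rest of your argument: define the $r^{(k)}$ recursively from any chosen $r^{(0)}=r_0$, absorbing the $\delta_k$ at each step; the map $r_0\mapsto r^{(k)}$ is still a bijection on $\{0,1,2\}^n$ for every $k$, so the decomposition into $3^n$ lattices and the verification of (i)--(iii) go through verbatim.
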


\subsection{Some Littlewood-Paley theory}
Denote by ${\mathscr S}({\mathbb R}^n)$ the class of Schwartz functions on ${\mathbb R}^n$.
The following statement can be found in \cite[Lemma 5.12]{FJW} (see also \cite[p. 783]{FJ} for some details).

\begin{lemma}\label{fjw}
There exist $\f,\theta\in {\mathscr S}({\mathbb R}^n)$ satisfying the following properties:
\begin{enumerate}
\renewcommand{\labelenumi}{(\roman{enumi})}
\item
${\rm{supp}}\,\theta\subset\{x:|x|\le 1\}$ and $\int\theta=0$;
\item
${\rm{supp}}\,\widehat \f\subset \{\xi:1/2\le|\xi|\le 2\}$;
\item
$\sum_{k\in {\mathbb Z}}\widehat \f(2^{-k}\xi)\widehat\theta(2^{-k}\xi)\equiv 1$ for all $\xi\not=0$.
\end{enumerate}
\end{lemma}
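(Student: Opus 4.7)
My plan is to first construct $\theta$ satisfying (i) with the extra property that $\widehat{\theta}$ is strictly positive on the punctured disk $\{0<|\xi|\le 2\}$, and then to define $\widehat{\f}$ as a normalized dyadic bump in such a way that the sum in (iii) collapses by a self-similarity argument.

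For the construction of $\theta$, I would start with any nonnegative $\psi\in C_c^\infty({\mathbb R}^n)$ supported in the unit ball with $\int\psi>0$. Then $\widehat{\psi}$ is entire and $\widehat{\psi}(0)>0$, so $\widehat{\psi}>0$ on some ball around the origin. After a dilation $\psi(x)\mapsto s^n\psi(sx)$ with $s\ge 1$ large enough, I may assume $\widehat{\psi}(\xi)>0$ on $\{|\xi|\le 2\}$ while $\psi$ remains supported in the unit ball. Setting $\theta=-\D\psi$ then produces $\theta\in C_c^\infty$ supported in the unit ball with $\int\theta=0$ and $\widehat{\theta}(\xi)=|\xi|^2\widehat{\psi}(\xi)>0$ for $0<|\xi|\le 2$, which gives (i) together with the positivity I will need below.

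For the construction of $\f$, I would pick a nonnegative $\eta\in C_c^\infty({\mathbb R}^n)$ supported in $\{1/2\le|\xi|\le 2\}$ with $\sum_{k\in{\mathbb Z}}\eta(2^{-k}\xi)>0$ for every $\xi\ne 0$ (a standard dyadic covering), and set
$$g(\xi)=\sum_{k\in{\mathbb Z}}\eta(2^{-k}\xi)\widehat{\theta}(2^{-k}\xi).$$
For each $\xi\ne 0$ only a bounded number of terms in this sum are nonzero (at most three, by the support of $\eta$), so $g\in C^\infty({\mathbb R}^n\setminus\{0\})$, and whenever $\eta(2^{-k}\xi)>0$ one has $|2^{-k}\xi|\le 2$, forcing $\widehat{\theta}(2^{-k}\xi)>0$. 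Hence every nonzero summand is positive and $g(\xi)>0$ for $\xi\ne 0$, while $g(2\xi)=g(\xi)$ by reindexing. Defining $\widehat{\f}(\xi)=\eta(\xi)/g(\xi)$ then produces a function in $C_c^\infty({\mathbb R}^n)$ supported in $\{1/2\le|\xi|\le 2\}$, so $\f\in\mathscr{S}({\mathbb R}^n)$ and (ii) holds. Finally, using $g(2^{-k}\xi)=g(\xi)$,
$$\sum_{k\in{\mathbb Z}}\widehat{\f}(2^{-k}\xi)\widehat{\theta}(2^{-k}\xi)=\frac{1}{g(\xi)}\sum_{k\in{\mathbb Z}}\eta(2^{-k}\xi)\widehat{\theta}(2^{-k}\xi)=1,$$
which is (iii).

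The main point to get right is the strict positivity of $\widehat{\theta}$ on $\{0<|\xi|\le 2\}$: without it, $g$ could vanish somewhere and the division defining $\widehat{\f}$ would break down. The Laplacian-plus-dilation construction of $\theta$ from $\psi$ is precisely what handles this obstacle; everything else is a bookkeeping exercise in dyadic scaling.
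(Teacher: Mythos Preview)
The paper does not supply its own proof of this lemma; it simply cites \cite[Lemma 5.12]{FJW} and \cite[p.~783]{FJ}. Your proposal is therefore not being compared against an argument in the paper, but it is a correct and self-contained construction, essentially the standard one behind those references: build a compactly supported $\theta$ whose transform is nonvanishing on the relevant annulus, pick a dyadic bump $\eta$, and normalize by the dyadically invariant sum $g$.

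One small point of rigor: when you write ``start with any nonnegative $\psi\in C_c^\infty$'' and then conclude ``$\widehat{\psi}>0$ on some ball around the origin,'' you are implicitly treating $\widehat{\psi}$ as real-valued. For a generic (non-even) $\psi$ the transform is complex, and then the terms $\eta(2^{-k}\xi)\widehat{\theta}(2^{-k}\xi)$ in $g$ could in principle cancel, so the division $\eta/g$ would not be justified. The fix is immediate: choose $\psi$ radial (or merely even), which forces $\widehat{\psi}$ to be real, and then your positivity argument goes through exactly as written.
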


Property (iii) implies, by taking the Fourier transform, the discrete version of the Calder\'on reproducing formula:
\begin{equation}\label{cald}
f=\sum_{k\in {\mathbb Z}}f*\f_{2^{-k}}*\theta_{2^{-k}}.
\end{equation}

\begin{remark}\label{conv}
There are several interpretations of convergence in (\ref{cald}). In particular, we will use the following one.
Let $1<p<\infty$ and suppose $w\in A_p$. Given $f\in L^p(w)$ and $N\in {\mathbb N}$, set
$$f_N(x)=\sum_{k=-N}^N\int_{E_N}(f*\f_{2^{-k}})(y)\theta_{2^{-k}}(x-y)dy,$$
where $\{E_N\}$ is an increasing sequence of bounded measurable sets such that $E_N\to {\mathbb R}^n$.
Then $f_N\to f$ in $L^p(w)$ as $N\to \infty$. For the continuous version of (\ref{cald}) this fact was proved by
M. Wilson \cite[Th. 7.1]{W2} (see also~\cite{W3}), and in the discrete case the proof follows the same lines.
\end{remark}

The following result is also due to M. Wilson (see \cite[Lemma 2.3]{W1} and \cite[Th. 4.3]{W2}).

\begin{theorem}\label{wil}
Let ${\mathscr D}$ be a dyadic lattice and let ${\mathscr G}\subset {\mathscr D}$ be a finite family of cubes. Assume that
$f=\sum_{Q\in {\mathscr G}}\la_Qa_Q$, where ${\rm{supp}}\,a_Q\subset~Q$, $\|a_Q\|_{L^{\infty}}\le |Q|^{-1/2}$, $\|\nabla a_Q\|_{L^{\infty}}\le \ell_Q^{-1}|Q|^{-1/2}$ and $\int a_Q=0$.
Then for all $1<p<\infty$ and for every $w\in A_p$,
\begin{equation}\label{wils}
\|f\|_{L^p(w)}\le C_{p,n}[w]_{A_p}^{1/2}\Biggl\|\Bigg(\sum_{Q\in {\mathscr G}}\frac{|\la_Q|^2}{|Q|}\chi_Q\Bigg)^{1/2}\Biggr\|_{L^p(w)}.
\end{equation}
\end{theorem}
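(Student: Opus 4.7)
The plan is to reduce the statement to an inequality about dyadic Haar martingales and then invoke the Chang-Wilson-Wolff exponential inequality. Since each $a_Q$ is supported in $Q$, has vanishing integral, and is Lipschitz with constant at most $\ell_Q^{-1}|Q|^{-1/2}$, expanding $a_Q$ in the Haar basis of a dyadic lattice ${\mathscr D}'$ -- chosen via Theorem~\ref{three} so that $3Q$ is contained in a dyadic cube of comparable sidelength -- produces Haar coefficients that satisfy $|\l a_Q,h_R\r|\le C(\ell_R/\ell_Q)^{1+n/2}$ for $R\subset 3Q$ and vanish otherwise. Interchanging summation gives a Haar representation $f=\sum_R\mu_R h_R$ with $\mu_R=\sum_Q \la_Q\l a_Q,h_R\r$, and a Schur-type argument exploiting the geometric decay above (combined with the boundedness of the dyadic maximal function on $L^p(w)$ for $w\in A_p$) shows that
$$\Bigl\|\Bigl(\sum_R\frac{|\mu_R|^2}{|R|}\chi_R\Bigr)^{1/2}\Bigr\|_{L^p(w)}\le C\Bigl\|\Bigl(\sum_{Q\in{\mathscr G}}\frac{|\la_Q|^2}{|Q|}\chi_Q\Bigr)^{1/2}\Bigr\|_{L^p(w)}.$$

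With this reduction in hand, the remaining task is the Haar-martingale version: for any finite sum $g=\sum_R\mu_R h_R$,
$$\|g\|_{L^p(w)}\le C_{p,n}[w]_{A_p}^{1/2}\Bigl\|\Bigl(\sum_R\frac{|\mu_R|^2}{|R|}\chi_R\Bigr)^{1/2}\Bigr\|_{L^p(w)}.$$
The right-hand square function is precisely the dyadic square function $S_d(g)$ of $g$ viewed as a Haar martingale. The Chang-Wilson-Wolff inequality \cite{CWW} provides the subgaussian distributional bound
$$|\{x\in Q_0:|g(x)|>\la,\;S_d(g)(x)\le \g\}|\le c\,e^{-c\la^2/\g^2}|Q_0|$$
on every dyadic cube $Q_0$ supporting $g$. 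Passing from this good-$\la$ inequality to the weighted $L^p$ bound relies on the reverse-H\"older self-improvement of $A_\infty$, whose exponent improves as $1+c/[w]_{A_\infty}$; integrating the exponential tail against $w$ then yields a polynomial factor of order $[w]_{A_\infty}^{1/2}$, and since $[w]_{A_\infty}\le C[w]_{A_p}$ one recovers $[w]_{A_p}^{1/2}$.

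The hard part is the second step, and specifically the extraction of the exponent exactly $1/2$. The Chang-Wilson-Wolff estimate is Gaussian rather than merely exponential, and this Gaussian character, combined with a careful use of the sharp quantitative reverse H\"older inequality, is what distinguishes a $[w]_{A_p}^{1/2}$ dependence from a weaker $[w]_{A_p}$ dependence. The first reduction, by contrast, is standard almost-orthogonality modulo bookkeeping with the Three Lattice Theorem.
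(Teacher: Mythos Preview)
The paper does not give its own proof of this theorem; it quotes the result from Wilson \cite[Lemma 2.3]{W1}, \cite[Th. 4.3]{W2}. Your outline is essentially Wilson's original argument: reduce to a Haar martingale and then apply the Chang--Wilson--Wolff subgaussian good-$\lambda$ inequality, with the sharp $A_\infty$ reverse H\"older exponent producing the factor $[w]_{A_\infty}^{1/2}\le C[w]_{A_p}^{1/2}$. So the approaches coincide.

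One point is worth tightening. In your first reduction you pass to an auxiliary lattice ${\mathscr D}'$ via the Three Lattice Theorem and then invoke the boundedness of the dyadic maximal function on $L^p(w)$. That maximal bound carries a power of $[w]_{A_p}$, so if it really entered the estimate you would lose the exponent $1/2$ in the end. In fact no auxiliary lattice and no maximal function are needed: since each $Q\in{\mathscr G}$ already lies in ${\mathscr D}$, expand $a_Q$ in the Haar system of ${\mathscr D}$ itself. Then $\langle a_Q,h_R\rangle=0$ unless $R\subseteq Q$ (for $R\supsetneq Q$ the Haar function is constant on $Q$ and $\int a_Q=0$), and your decay $|\langle a_Q,h_R\rangle|\le C(\ell_R/\ell_Q)^{1+n/2}$ together with Cauchy--Schwarz in the scale index yields the \emph{pointwise} inequality
\[
\sum_{R}\frac{|\mu_R|^2}{|R|}\chi_R(x)\;\le\;C_n\sum_{Q\in{\mathscr G}}\frac{|\lambda_Q|^2}{|Q|}\chi_Q(x),
\]
so the constant in your displayed square-function comparison is absolute, as you need.
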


\begin{remark}\label{Ainf}
Notice that actually (\ref{wils}) was proved in \cite{W1} with a smaller $[w]_{A_{\infty}}$ constant defined by
$$[w]_{A_{\infty}}=\sup_Q\frac{1}{\int_Qw}\int_QM(w\chi_Q),$$
where $Mf(x)=\sup_{Q\ni x}\frac{1}{|Q|}\int_Q|f|$ is the Hardy-Littlewood maximal operator. See also \cite{HP} for various estimates in terms of $[w]_{A_{\infty}}$.
\end{remark}

Theorem \ref{wil} along with the continuous version of (\ref{cald}) was applied in~\cite{W1} in order to obtain the $L^p(w)$-norm relation between $f$ and the continuous square function.
In a similar way, using (\ref{cald}), we obtain the $L^p(w)$-norm relation between $f$ and the discrete square function defined
(for a given dyadic lattice ${\mathscr D}$) by
$$S_{\f,{\mathscr D}}(f)(x)=\left(\sum_{k\in {\mathbb Z}}\sum_{Q\in {\mathscr D}:\ell_Q=2^{-k}}\Big(\frac{1}{|Q|}\int_Q|f*\f_{2^{-k}}|^2\Big)\chi_Q(x)\right)^{1/2}.$$

\begin{theorem}\label{discr}
There exists a function $\f\in {\mathscr S}({\mathbb R}^n)$ with ${\rm{supp}}\,\widehat \f\subset \{\xi:1/2\le|\xi|\le 2\}$ and there are $3^n$ dyadic lattices ${\mathscr D}^{(j)}$
such that for every $w\in A_p$ and for any $f\in L^p(w), 1<p<\infty,$
$$\|f\|_{L^p(w)}\le C_{p,n}[w]_{A_p}^{1/2}\sum_{j=1}^{3^n}\|S_{\f,{\mathscr D}^{(j)}}(f)\|_{L^p(w)}.$$
\end{theorem}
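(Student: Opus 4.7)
The plan is to combine the discrete Calder\'on reproducing formula~(\ref{cald}) with Wilson's atomic estimate (Theorem~\ref{wil}), using the Three Lattice Theorem (Theorem~\ref{three}) to regroup the resulting atoms into $3^n$ dyadic families. Fix $\varphi,\theta\in\mathscr{S}(\mathbb{R}^n)$ supplied by Lemma~\ref{fjw}, where the annulus containing $\mathrm{supp}\,\widehat\varphi$ is shifted by a factor of $3$ so that the rescaling below lands inside $\{1/2\le|\xi|\le 2\}$. Fix also a dyadic lattice $\mathscr{D}$ whose cubes have sidelengths $\{2^{-k}:k\in\mathbb{Z}\}$. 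By Remark~\ref{conv} it is enough to prove the bound for the partial sums of (\ref{cald}), with constants independent of the truncation, and to pass to the $L^p(w)$-limit at the end; I therefore treat only finite sums.

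For $Q\in\mathscr{D}$ with $\ell_Q=2^{-k}$ I set
\[
b_Q(x):=\int_Q (f*\varphi_{2^{-k}})(y)\,\theta_{2^{-k}}(x-y)\,dy,
\]
so that $f=\sum_Q b_Q$ by (\ref{cald}) together with the fact that each scale of $\mathscr{D}$ tiles $\mathbb{R}^n$. I then claim that each $b_Q$ is a scalar multiple of an admissible atom on $3Q$ in the sense of Theorem~\ref{wil}: the inclusion $\mathrm{supp}\,\theta\subset\{|x|\le 1\}$ forces $\mathrm{supp}\,b_Q\subset 3Q$; the cancellation $\int\theta=0$ yields $\int b_Q=0$; and the pointwise bounds $\|\theta_{2^{-k}}\|_\infty\lesssim|Q|^{-1}$ and $\|\nabla\theta_{2^{-k}}\|_\infty\lesssim\ell_Q^{-1}|Q|^{-1}$ combined with Cauchy-Schwarz give
\[
\|b_Q\|_\infty\lesssim \mu_Q\,|3Q|^{-1/2},\qquad \|\nabla b_Q\|_\infty\lesssim \mu_Q\,\ell_{3Q}^{-1}|3Q|^{-1/2},
\]
where $\mu_Q:=\bigl(\int_Q|f*\varphi_{2^{-k}}|^2\bigr)^{1/2}$.

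By Theorem~\ref{three} the enlarged cubes $\{3Q:Q\in\mathscr{D}\}$ partition into $3^n$ dyadic lattices $\mathscr{D}^{(1)},\dots,\mathscr{D}^{(3^n)}$. I split $f=\sum_{j=1}^{3^n}f_j$ so that each $f_j$ collects the $b_Q$ whose enlargement $3Q$ lies in $\mathscr{D}^{(j)}$, and for $R\in\mathscr{D}^{(j)}$ I write $Q_R$ for the unique cube in $\mathscr{D}$ with $3Q_R=R$. Applying Theorem~\ref{wil} to each finite $f_j$ produces
\[
\|f\|_{L^p(w)}\le C[w]_{A_p}^{1/2}\sum_{j=1}^{3^n}\Bigg\|\Bigg(\sum_{R\in\mathscr{D}^{(j)}}\frac{\mu_{Q_R}^2}{|R|}\chi_R\Bigg)^{1/2}\Bigg\|_{L^p(w)}.
\]
Because $Q_R\subset R$ and $\ell_{Q_R}=\ell_R/3$, the inner sum is pointwise at most a constant times $\sum_R\frac{1}{|R|}\bigl(\int_R|f*\varphi_{\ell_R/3}|^2\bigr)\chi_R$. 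Defining $\widetilde\varphi(x):=3^n\varphi(3x)$ so that $\widetilde\varphi_{\ell_R}=\varphi_{\ell_R/3}$ and $\mathrm{supp}\,\widehat{\widetilde\varphi}\subset\{1/2\le|\xi|\le 2\}$ (by the preliminary choice of annulus), the last expression equals $(S_{\widetilde\varphi,\mathscr{D}^{(j)}}(f))^2$, and renaming $\widetilde\varphi$ as $\varphi$ delivers the conclusion.

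The main subtlety is precisely this scale matching between the convolution scale $\ell_R/3$ produced by the reproducing formula and the scale $\ell_R$ used in the definition of $S_{\varphi,\mathscr{D}^{(j)}}$; it is handled by the preliminary shift of the Fourier annulus together with the final rescaling $\varphi\mapsto\widetilde\varphi$. The remaining ingredients --- verifying the atomic size and cancellation bounds for $b_Q$, invoking the $L^p(w)$-convergence of (\ref{cald}) from Remark~\ref{conv}, and arranging the finiteness required by Theorem~\ref{wil} --- are routine.
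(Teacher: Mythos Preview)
Your overall strategy coincides with the paper's: use the Calder\'on formula to write $f$ as a sum of atoms supported on tripled cubes, invoke the Three Lattice Theorem to sort those triples into $3^n$ dyadic lattices, and apply Wilson's estimate to each piece. The gap is in the scale-matching step. You start from a lattice $\mathscr{D}$ whose sidelengths are $2^{-k}$; after tripling, the lattices $\mathscr{D}^{(j)}$ consist of cubes with sidelengths $3\cdot 2^{-k}$, which are \emph{never} of the form $2^{-m}$. But the square function $S_{\varphi,\mathscr{D}^{(j)}}$ as defined just before the theorem sums only over cubes $Q\in\mathscr{D}^{(j)}$ with $\ell_Q=2^{-k}$, so for your lattices $S_{\widetilde\varphi,\mathscr{D}^{(j)}}(f)\equiv 0$ and the claimed inequality is vacuous. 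Your rescaling $\varphi\mapsto\widetilde\varphi$ (and the preliminary shift of the Fourier annulus) fixes the convolution scale but does nothing about the sidelengths that enter the definition; the expression $\sum_R\frac{1}{|R|}\int_R|f*\widetilde\varphi_{\ell_R}|^2\chi_R$ you obtain is simply not $S_{\widetilde\varphi,\mathscr{D}^{(j)}}(f)^2$ as the paper defines it.

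The paper handles this by reversing the order of the adjustment: it starts from a lattice $\mathscr{D}$ whose sidelengths are $2^k/3$, so that after the Three Lattice Theorem the lattices $\mathscr{D}^{(j)}$ have sidelengths exactly $2^k$. For $Q\in\mathscr{D}$ with $\ell_Q=2^{-k}/3$ the atom is built using the convolution scale $2^{-k}$ (not $\ell_Q$); then $P=3Q$ has $\ell_P=2^{-k}$ and the coefficient $|\lambda_P|^2/|P|=\frac{c^2}{|P|}\int_P|f*\varphi_{2^{-k}}|^2$ matches the summand of $S_{\varphi,\mathscr{D}^{(j)}}(f)^2$ on the nose, with no rescaling of $\varphi$ and no annulus shift needed. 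Once you make this single change of initial lattice, the rest of your argument goes through.
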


\begin{proof}
Let $\f,\theta$ be functions from Lemma \ref{fjw}. Let ${\mathscr D}$ be a dyadic lattice such that
for every $Q\in {\mathscr D}$ its sidelength is of the form $\ell_Q=\frac{2^k}{3}, k\in~{\mathbb Z}$.
Let ${\mathscr D}^{(j)}, j=1,\dots, 3^n,$ be dyadic lattices obtained by applying Theorem~\ref{three} to ${\mathscr D}$. Then for every $Q\in {\mathscr D}^{(j)}$
its sidelength is of the form $\ell_Q=2^k, k\in {\mathbb Z}$.

For $Q\in {\mathscr D}$ with $\ell_Q=2^{-k}/3$ set
$$\gamma_Q(x)=\int_{Q}(f*\f_{2^{-k}})(y)\theta_{2^{-k}}(x-y)dy.$$
It is easy to check that ${\rm{supp}}\,\ga_Q\subset 3Q$, $\int \ga_Q=0$ and
\begin{equation}\label{cond}
\max(\|\ga_Q\|_{L^{\infty}}, \ell_Q\|\nabla \ga_Q\|_{L^{\infty}})\le c\left(\frac{1}{|Q|}\int_Q|f*\f_{2^{-k}}|^2\right)^{1/2},
\end{equation}
where $c$ depends only on $n$ and $\theta$.

Take an increasing sequence of cubes $Q_N\in {\mathscr D}$ such that $\ell_{Q_N}=\frac{2^N}{3}, N\in {\mathbb N}$.
Set
$${\mathscr G}_N=\{Q\in {\mathscr D}:Q\subseteq Q_N, \ell_Q=2^{-k}/3, k\in [-N,N]\}.$$
By Theorem \ref{three}, one can write
$$\{3Q:Q\in {\mathscr G}_N\}=\bigcup_{j=1}^{3^n}{\mathscr G}^{(j)}_N,$$
where ${\mathscr G}^{(j)}_N\subset {\mathscr D}^{(j)}$. Then
\begin{eqnarray*}
f_N(x)&=&\sum_{k=-N}^N\int_{Q_N}(f*\f_{2^{-k}})(y)\theta_{2^{-k}}(x-y)dy\\
&=&\sum_{k=-N}^N\sum_{Q\in {\mathscr D}: Q\subseteq Q_N, \ell_Q=2^{-k}/3}\gamma_Q(x)=\sum_{j=1}^{3^n}\sum_{P\in {\mathscr G}^{(j)}_N}\la_P^{(j)}a_P^{(j)},
\end{eqnarray*}
where, for $P=3Q, Q\in {\mathscr D}, \ell_Q=2^{-k}/3,$ we set
$$\la_P^{(j)}=c\Big(\int_{3Q}|f*\f_{2^{-k}}|^2\Big)^{1/2}$$
and $a_P^{(j)}=\frac{1}{\la_P^{(j)}}\ga_Q$.

By (\ref{cond}), we have that the functions $a_P^{(j)}$ satisfy all conditions from Theorem \ref{wil}. Therefore, by (\ref{wils}),
\begin{eqnarray*}
\|f_N\|_{L^p(w)}&\le& C_{p,n}[w]_{A_p}^{1/2}\sum_{j=1}^{3^n}
\Biggl\|\Bigg(\sum_{P\in {\mathscr G}^{(j)}_N}\frac{|\la_P^{(j)}|^2}{|P|}\chi_P\Bigg)^{1/2}\Biggr\|_{L^p(w)}\\
&\le& C_{p,n}[w]_{A_p}^{1/2}\sum_{j=1}^{3^n}\|S_{\f,{\mathscr D}^{(j)}}(f)\|_{L^p(w)}.
\end{eqnarray*}
Applying the convergence argument as described in Remark \ref{conv} completes the proof.
\end{proof}

\subsection{The sharp extrapolation}
The following result was proved in~\cite{D2}.

\begin{theorem}\label{shext}
Assume that for some $f,g$ and for all weights $w\in A_{p_0}$,
$$\|f\|_{L^{p_0}(w)}\le CN([w]_{A_{p_0}})\|g\|_{L^{p_0}(w)},$$
where $N$ is an increasing function and the constant $C$ does not depend on $w$. Then for all $1<p<\infty$ and all $w\in A_p$,
$$\|f\|_{L^p(w)}\le CK(w)\|g\|_{L^p(w)},$$
where
$$
K(w)=\begin{cases}
N\big([w]_{A_p}(2\|M\|_{L^p(w)\to L^p(w)})^{p_0-p}\big),& {\rm{if}}\,\, p<p_0;\\
N\Big([w]_{A_p}^{\frac{p_0-1}{p-1}}(2\|M\|_{L^{p'}(w^{1-p'})\to L^{p'}(w^{1-p'})})^{\frac{p-p_0}{p-1}}\Big), & {\rm{if}}\,\, p>p_0.
\end{cases}
$$
In particular, $K(w)\le C_1N\Big(C_2[w]_{A_p}^{\max\big(1,\frac{p_0-1}{p-1}\big)}\Big)$ for $w\in A_p$.
\end{theorem}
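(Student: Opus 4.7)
The plan is to prove Theorem~\ref{shext} by the Rubio de Francia iteration algorithm, handling the cases $p<p_0$ and $p>p_0$ by separate (dual) incarnations of the same idea. Recall that for a weight $u$ on $\mathbb R^n$ and an exponent $q>1$ with $u\in A_q$, the Rubio de Francia operator
$$R_{u,q}h:=\sum_{k=0}^{\infty}\frac{M^kh}{(2\|M\|_{L^q(u)\to L^q(u)})^k}$$
produces, for any $h\ge 0$ in $L^q(u)$, a majorant $Rh\ge h$ with $\|Rh\|_{L^q(u)}\le 2\|h\|_{L^q(u)}$ and $[Rh]_{A_1}\le 2\|M\|_{L^q(u)\to L^q(u)}$.

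First I treat $p<p_0$. Given $w\in A_p$, apply the algorithm at exponent $p$ with respect to $w$ to the input $h=|g|$, producing $H$, and set $w_0:=wH^{p-p_0}$. Two facts drive the estimate. (i) H\"older's inequality with exponents $p_0/p$ and $p_0/(p_0-p)$, applied to the decomposition
$$|f|^pw=\bigl(|f|^{p_0}w_0\bigr)^{p/p_0}\cdot\bigl(H^{p}w\bigr)^{(p_0-p)/p_0},$$
gives $\|f\|_{L^p(w)}\le\|f\|_{L^{p_0}(w_0)}\|H\|_{L^p(w)}^{(p_0-p)/p_0}$. (ii) The algebraic lemma $[w_0]_{A_{p_0}}\le[H]_{A_1}^{p_0-p}[w]_{A_p}$: the $A_1$ condition yields $H^{p-p_0}\le([H]_{A_1}/\langle H\rangle_Q)^{p_0-p}$ on $Q$, which handles $\langle w_0\rangle_Q$; for the dual factor $\langle w_0^{1-p_0'}\rangle_Q^{p_0-1}$, H\"older with exponents $(p_0-1)/(p_0-p)$ and $(p_0-1)/(p-1)$ applied to $\langle H^{(p_0-p)/(p_0-1)}w^{1-p_0'}\rangle_Q$ produces $\langle H\rangle_Q^{(p_0-p)/(p_0-1)}\langle w^{1-p'}\rangle_Q^{(p-1)/(p_0-1)}$; multiplying, the $\langle H\rangle_Q$ factors cancel exactly and the surviving $w$ factors reassemble into $[w]_{A_p}^1$. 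Combining (i)--(ii) with the hypothesis applied to the weight $w_0$, and the elementary bound $\|g\|_{L^{p_0}(w_0)}^{p_0}=\int|g|^{p_0}H^{p-p_0}w\le\int|g|^pw$ (from $|g|\le H$ and $p-p_0<0$), yields $\|f\|_{L^p(w)}\le CN([w_0]_{A_{p_0}})\|g\|_{L^p(w)}$, which is the desired inequality with the first form of $K(w)$.

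The case $p>p_0$ I would handle dually. Using $\|f\|_{L^p(w)}=\sup\int|f|h$ over $h\ge 0$ with $\|h\|_{L^{p'}(\sigma)}\le 1$, where $\sigma=w^{1-p'}\in A_{p'}$, apply the algorithm at exponent $p'$ with respect to $\sigma$ to obtain $H$, and set $w_0:=H^{(p-p_0)/(p-1)}w^{(p_0-1)/(p-1)}$. Observe that the exponents $\alpha:=(p-p_0)/(p-1)$ and $\beta:=(p_0-1)/(p-1)$ satisfy $\alpha+\beta=1$. H\"older gives $\int|f|h\le\int|f|H\le\|f\|_{L^{p_0}(w_0)}\|H\|_{L^{p_0'}(w_0^{1-p_0'})}$; a direct computation of $w_0^{1-p_0'}$ shows $\|H\|_{L^{p_0'}(w_0^{1-p_0'})}\le C\|H\|_{L^{p'}(\sigma)}^{p'/p_0'}\le C$, and a further H\"older split (conjugate pair $p/p_0,\,p/(p-p_0)$) gives $\|g\|_{L^{p_0}(w_0)}\le C\|g\|_{L^p(w)}$. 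The algebraic lemma in this case reads $[w_0]_{A_{p_0}}\le[H]_{A_1}^{\alpha}[w]_{A_p}^{\beta}$, proved by H\"older (using $\alpha+\beta=1$) for $\langle w_0\rangle_Q$ and the $A_1$ upper bound on the negative power $H^{-\gamma}$ (with $\gamma=\alpha/(p_0-1)$) that appears in $w_0^{1-p_0'}$.

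Finally, the uniform bound $K(w)\le C_1N(C_2[w]_{A_p}^{\max(1,(p_0-1)/(p-1))})$ follows by inserting Buckley's sharp estimate $\|M\|_{L^q(u)\to L^q(u)}\le C_q[u]_{A_q}^{1/(q-1)}$ into the explicit formulas for $K(w)$ and, in the second case, the identity $[\sigma]_{A_{p'}}=[w]_{A_p}^{1/(p-1)}$. I expect the main obstacle is verifying the two algebraic lemmas with sharp exponents: the choice of $w_0$ and the associated H\"older exponents have to be coordinated so precisely that the $\langle H\rangle_Q$ factors cancel and the remaining factors reassemble into $[w]_{A_p}$ with the exponent prescribed by the theorem.
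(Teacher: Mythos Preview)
The paper does not contain its own proof of Theorem~\ref{shext}; it quotes the result from \cite{D2} (Duoandikoetxea). Your proposal is correct and is precisely the Rubio de Francia iteration argument given in \cite{D2}: in the case $p<p_0$ one majorizes $|g|$ by $H=R_{w,p}|g|$ and checks the factorization $[wH^{p-p_0}]_{A_{p_0}}\le [H]_{A_1}^{p_0-p}[w]_{A_p}$, while for $p>p_0$ one dualizes, majorizes the test function by $H=R_{\sigma,p'}h$, and checks $[H^{\alpha}w^{\beta}]_{A_{p_0}}\le [H]_{A_1}^{\alpha}[w]_{A_p}^{\beta}$ with $\alpha=\frac{p-p_0}{p-1}$, $\beta=\frac{p_0-1}{p-1}$. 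The H\"older computations you outline (including the cancellation of the $\langle H\rangle_Q$ factors and the identity $\|H\|_{L^{p_0'}(w_0^{1-p_0'})}=\|H\|_{L^{p'}(\sigma)}^{p'/p_0'}$) all go through exactly as written, and the final ``in particular'' clause follows from Buckley's bound as you indicate. There is nothing further to compare.
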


\subsection{Some two-weighted estimates}
Let
$$Hf(x)=\text{p.v.}\frac{1}{\pi}\int_{\mathbb R}\frac{f(y)}{x-y}dy\quad\text{and}\quad H^{\star}f(x)=\sup_{\e>0}\frac{1}{\pi}\left|\int_{|x-y|>\e}\frac{f(y)}{x-y}dy\right|$$
be the Hilbert and the maximal Hilbert transforms, respectively.

Given two weights $u$ and $v$, set
$$[u,v]_{A_2}=\sup_{Q}\,\l u\r_Q\l v^{-1}\r_Q.$$
Then the following two-weighted estimates hold:
\begin{eqnarray}
&&\max(\|M\|_{L^2(v)\to L^2(u)},\|H^{\star}\|_{L^2(v)\to L^2(u)})\label{two}\\
&&\le C[u,v]_{A_2}^{1/2}\big([u]_{A_2}^{1/2}+[v]_{A_2}^{1/2}\big).\nonumber
\end{eqnarray}
The proofs of these estimates can be found in \cite{HL,HP} (notice that stronger versions of (\ref{two}) in terms of the $[w]_{A_{\infty}}$ constants are proved there).

\subsection{The partial sum operator}
Given an interval $[a,b]$, the partial sum operator $S_{[a,b]}$ is defined by $\widehat{S_{[a,b]}f}=\widehat f\chi_{[a,b]}$. We will use two standard facts about $S_{[a,b]}$
(see, e.g., \cite{D1}).
First,
\begin{equation}\label{ff}
S_{[a,b]}=\frac{i}{2}({\mathcal M}_aH{\mathcal M}_{-a}-{\mathcal M}_bH{\mathcal M}_{-b}),
\end{equation}
where ${\mathcal M}_af(x)=e^{2\pi iax}f(x)$.
Second, if $(T_{m\chi_{[a,b]}}f)\,\widehat{}=m\chi_{[a,b]}\widehat f$, then
\begin{equation}\label{sf}
T_{m\chi_{[a,b]}}f=m(a)S_{[a,b]}f+\int_a^b(S_{[t,b]}f)m'(t)dt.
\end{equation}

\section{Two key lemmas}
Given a dyadic lattice ${\mathscr D}$ in ${\mathbb R}$, a weight $w$ and $k\in {\mathbb Z}$, denote
$$w_{k,{\mathscr D}}=\sum_{I\in {\mathscr D}:|I|=2^{-k}}\l w\r_{I}\chi_I.$$

\begin{lemma}\label{a2} Let $w\in A_2$. Then $w_{k,{\mathscr D}}\in A_2$ and
\begin{equation}\label{a21}
[w_{k,{\mathscr D}}]_{A_2}\le 9[w]_{A_2}.
\end{equation}
Also, for two arbitrary dyadic lattices ${\mathscr D}$ and ${\mathscr D}'$,
\begin{equation}\label{a22}
[w_{k,{\mathscr D}},((w^{-1})_{k,{\mathscr D}'})^{-1}]_{A_2}\le 9[w]_{A_2}.
\end{equation}
\end{lemma}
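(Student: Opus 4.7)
The plan is to establish (\ref{a22}) first and then derive (\ref{a21}) as a free consequence. For the reduction, Jensen's inequality on each level-$k$ ${\mathscr D}$-cell $I$ gives $1/\l w\r_I\le \l w^{-1}\r_I$, so the pointwise inequality $(w_{k,{\mathscr D}})^{-1}\le (w^{-1})_{k,{\mathscr D}}$ holds; averaging over any interval $Q$ yields $[w_{k,{\mathscr D}}]_{A_2}\le [w_{k,{\mathscr D}},((w^{-1})_{k,{\mathscr D}})^{-1}]_{A_2}$, so (\ref{a21}) is (\ref{a22}) applied with ${\mathscr D}'={\mathscr D}$.

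For (\ref{a22}), I would fix an interval $Q$ and split into a ``large'' case ($|Q|>2^{-k}$) and a ``small'' case ($|Q|\le 2^{-k}$). In the large case, I would saturate $Q$ on each side to the nearest level-$k$ endpoint in each lattice, obtaining $\tilde Q^{(1)},\tilde Q^{(2)}\supseteq Q$ with each one-sided extension less than $2^{-k}$, and then set $R=\tilde Q^{(1)}\cup \tilde Q^{(2)}$. The key observation is that $R$ is again an interval whose extension on either side of $Q$ is merely the \emph{maximum} of the two lattice extensions, so $|R|<|Q|+2\cdot 2^{-k}<3|Q|$. Using $|Q\cap I|\le |I|$ on each level-$k$ cell, one obtains $\int_Q w_{k,{\mathscr D}}\le \int_R w$ and analogously $\int_Q (w^{-1})_{k,{\mathscr D}'}\le \int_R w^{-1}$; dividing by $|Q|$ and multiplying, the product of averages is at most $(|R|/|Q|)^2\l w\r_R\l w^{-1}\r_R<9[w]_{A_2}$.

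In the small case $Q$ meets at most two adjacent level-$k$ intervals $I_1,I_2$ in ${\mathscr D}$ (and at most two $I_1',I_2'$ in ${\mathscr D}'$), and is contained in each union, so
\begin{equation*}
\l w_{k,{\mathscr D}}\r_Q\l (w^{-1})_{k,{\mathscr D}'}\r_Q=\sum_{i,j}\frac{|Q\cap I_i|}{|Q|}\frac{|Q\cap I_j'|}{|Q|}\l w\r_{I_i}\l w^{-1}\r_{I_j'}
\end{equation*}
is a convex combination, bounded by $\max_{i,j}\l w\r_{I_i}\l w^{-1}\r_{I_j'}$. For each pair $(i,j)$ both $I_i$ and $I_j'$ meet $Q$, so the convex hull $L_{ij}$ of $I_i\cup I_j'$ is an interval of length at most $2\cdot 2^{-k}+|Q|\le 3\cdot 2^{-k}$; hence $\l w\r_{I_i}\le 3\l w\r_{L_{ij}}$ and $\l w^{-1}\r_{I_j'}\le 3\l w^{-1}\r_{L_{ij}}$, and the product is at most $9[w]_{A_2}$.

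The main obstacle I foresee is the large case: bounding the two factors via the separate saturations $\tilde Q^{(1)}$ and $\tilde Q^{(2)}$ would force an $A_2$-type inequality for averages taken over \emph{different} intervals, which is unavailable. Passing to the single interval $R$ fixes this, and it works only because extensions from the two distinct lattices take a maximum rather than compounding---this geometric point is what keeps $|R|/|Q|<3$ and delivers the uniform constant $9$.
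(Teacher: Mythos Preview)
Your proof is correct and follows essentially the same strategy as the paper: split into the cases $|Q|>2^{-k}$ and $|Q|\le 2^{-k}$, and in each case compare the averages of the conditioned weights over $Q$ with the averages of $w$ and $w^{-1}$ over a suitable enlargement of $Q$ of comparable length. Two minor organizational differences are worth noting: you derive (\ref{a21}) from (\ref{a22}) via the Jensen inequality $(w_{k,{\mathscr D}})^{-1}\le (w^{-1})_{k,{\mathscr D}}$, whereas the paper proves (\ref{a21}) first (using the same H\"older/Jensen step inside the argument) and then says the proof of (\ref{a22}) is ``identically the same''; and in the two-lattice estimate you are explicit about passing to the single interval $R=\tilde Q^{(1)}\cup\tilde Q^{(2)}$ so that both factors are controlled by averages over the \emph{same} interval, a point the paper leaves implicit.
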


\begin{proof} Denote $u=w_{k,{\mathscr D}}$ and ${\mathcal P}_k=\{I\in {\mathscr D}:|I|=2^{-k}\}$.
Take an arbitrary interval $J\subset {\mathbb R}$.
Notice that
\begin{equation}\label{uj}
\l u\r_J=\frac{1}{|J|}\sum_{I\in {\mathcal P}_k:I\cap J\not=\emptyset}\frac{|I\cap J|}{|I|}\int_Iw.
\end{equation}
Next, by H\"older's inequality,
$$|I|^2\le \Big(\int_Iw\Big)\Big(\int_Iw^{-1}\Big),$$
which implies
\begin{eqnarray}
\l u^{-1}\r_J&=&\frac{1}{|J|}\sum_{I\in {\mathcal P}_k:I\cap J\not=\emptyset}|I\cap J|\frac{|I|}{\int_Iw}\nonumber\\
&\le& \frac{1}{|J|}\sum_{I\in {\mathcal P}_k:I\cap J\not=\emptyset}\frac{|I\cap J|}{|I|}\int_Iw^{-1}.\label{uj1}
\end{eqnarray}

Denote
$$J^*=\bigcup_{I\in {\mathcal P}_k:I\cap J\not=\emptyset}I.$$
If $|J|>2^{-k}$, then $|J^*|\le 3|J|$, and hence, by (\ref{uj}) and (\ref{uj1}),
\begin{equation}\label{big}
\l u\r_J\le \frac{1}{|J|}\int_{J^*}w\le 3\,\l w\r_{J^*}\quad\text{and}\quad \l u^{-1}\r_J\le  3\,\l w^{-1}\r_{J^*}.
\end{equation}
Assume that $|J|\le 2^{-k}$. Then $|J^*|\le 2^{-k+1}$. Hence in this case,
$$\l u\r_J\le \frac{1}{|I|}\int_{J^*}w\le 2\,\l w\r_{J^*}\quad\text{and}\quad \l u^{-1}\r_J\le  2\,\l w^{-1}\r_{J^*},$$
which along with (\ref{big}) implies (\ref{a21}).

The proof of (\ref{a22}) is identically the same. Denote $v=((w^{-1})_{k,{\mathscr D}'})^{-1}$.
If $|J|>2^{-k}$, then by (\ref{big}),
$$
\l u\r_J\le 3\,\l w\r_{J^*}\quad\text{and}\quad \l v^{-1}\r_J\le  3\,\l w^{-1}\r_{J^*}.
$$
Similarly, if $|J|\le 2^{-k}$, then
$$
\l u\r_J\le 2\,\l w\r_{J^*}\quad\text{and}\quad \l v^{-1}\r_J\le  2\,\l w^{-1}\r_{J^*},
$$
which along with the previous estimate proves (\ref{a22}).
\end{proof}

Define the operator $T_{m\chi_{[a,b]}}$ by $(T_{m\chi_{[a,b]}}f)\,\widehat{}=m\chi_{[a,b]}\widehat f$. In the lemma below we use
the same notation $u_{k,{\mathscr D}}$ as in Lemma \ref{a2}.

\begin{lemma}\label{mes}
Assume that $m$ is a bounded and differentiable function on $[a,b]$. Then for all $u,v\in A_2$,
$$\|T_{m\chi_{[a,b]}}f\|_{L^2(u_{k,{\mathscr D}})}\le cK(m)N(u,v)(2^{-k}(b-a)+1)\|f\|_{L^2(v)},$$
where $K(m)=\|m\|_{L^{\infty}}+\int_a^b|m'(t)|dt$,
$$N(u,v)=\min\big([u,v]_{A_2},[u_{k,{\mathscr D}},v]_{A_2}\big)^{1/2}\big([u]_{A_2}^{1/2}+[v]_{A_2}^{1/2}\big)$$
and $c>0$ is an absolute constant.
\end{lemma}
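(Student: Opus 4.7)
The plan is to use (\ref{sf}) to express $T_{m\chi_{[a,b]}}f = m(a)S_{[a,b]}f + \int_a^b S_{[t,b]}f\, m'(t)\,dt$ and pull the $t$-integral out of the $L^2(u_{k,\mathscr D})$-norm by Minkowski. This reduces the task to a uniform bound on $\|S_{[c,d]}f\|_{L^2(u_{k,\mathscr D})}$ for $[c,d]\subset[a,b]$, after which the factor $K(m)=\|m\|_{L^\infty}+\int_a^b|m'|$ comes out by summing the $|m(a)|$ and the total-variation integral.

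For the $[u_{k,\mathscr D},v]_{A_2}^{1/2}$ branch of $N(u,v)$ I will use (\ref{ff}) to dominate $|S_{[c,d]}f|$ pointwise by $\tfrac12(H^*\mathcal{M}_{-c}f + H^*\mathcal{M}_{-d}f)$ and apply (\ref{two}) directly to the pair $(u_{k,\mathscr D},v)$; the factor $[u_{k,\mathscr D}]_{A_2}^{1/2}$ that appears is absorbed into $3[u]_{A_2}^{1/2}$ via (\ref{a21}), and modulations preserve $L^2(v)$-norms. This branch comes out cleanly with no dependence on $b-a$, and the extra factor $(2^{-k}(b-a)+1)\ge 1$ is trivially accommodated.

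For the $[u,v]_{A_2}^{1/2}$ branch I plan to split $[a,b]$ into $N\le 2^{-k}(b-a)+1$ disjoint subintervals $I_j$ of length at most $2^k$, write $T_{m\chi_{[a,b]}}=\sum_j T_{m\chi_{I_j}}$, and apply the triangle inequality. For each piece, (\ref{sf}) again reduces the bound to an estimate on $\|S_{[c,d]}f\|_{L^2(u_{k,\mathscr D})}$ with $d-c\le 2^k$. Summing the $K(m|_{I_j})$ over $j$ contributes the required factor $(2^{-k}(b-a)+1)K(m)$. Within each short interval, the point is that $|S_{[c,d]}f|^2$ has Fourier support in $[-(d-c),d-c]\subset[-2^k,2^k]$, which matches the scale $2^{-k}$ on which $u_{k,\mathscr D}$ is piecewise constant. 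Using the self-adjointness identity $\int F\cdot u_{k,\mathscr D}=\int F_{k,\mathscr D}\cdot u$ together with the reproducing formula $|S_{[c,d]}f|^2=|S_{[c,d]}f|^2*\rho_k$ for a Schwartz $\rho_k$ with $\hat\rho_k\equiv 1$ on $[-2^k,2^k]$, I aim to compare $\|S_{[c,d]}f\|_{L^2(u_{k,\mathscr D})}$ with $\|S_{[c,d]}f\|_{L^2(u)}$, after which (\ref{ff}) and (\ref{two}) applied to $(u,v)$ close the short-interval estimate.

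The main obstacle will be carrying out this band-limited comparison so that the resulting constant depends on the $A_2$ characteristics only through $[u]_{A_2}^{1/2}+[v]_{A_2}^{1/2}$, as prescribed by $N(u,v)$, rather than a higher power; the delicate step is handling the smooth tails of the reproducing kernel $\rho_k$ against the step structure of $u_{k,\mathscr D}$, where the $A_2$ condition on $u$ is exactly what is needed to keep the constant well-controlled.
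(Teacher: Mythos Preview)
Your treatment of the $[u_{k,\mathscr D},v]_{A_2}$ branch is correct and matches the paper: once $|S_{[c,d]}f|\le\tfrac12(|H\mathcal M_{-c}f|+|H\mathcal M_{-d}f|)\le\tfrac12(H^{\star}\mathcal M_{-c}f+H^{\star}\mathcal M_{-d}f)$, estimate (\ref{two}) with the pair $(u_{k,\mathscr D},v)$ and (\ref{a21}) close that branch with no $(b-a)$ loss.

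The gap is in the $[u,v]_{A_2}$ branch. The band--limited comparison you outline cannot be carried out with an absolute constant: for $g=S_{[c,d]}f$ with $d-c\le 2^{k}$, the best one gets from $|g|^2=|g|^2*\rho_k$ and self-adjointness is
\[
\int_I|g|^2 \le C\inf_{x\in I}M(|g|^2)(x)\quad\text{or}\quad \sup_I|g|\le C\inf_I M g,
\]
so that $\|g\|_{L^2(u_{k,\mathscr D})}\le C\|Mg\|_{L^2(u)}$, and you are forced through an extra maximal function acting on $S_{[c,d]}f$ rather than on $f$. This extra $M$ costs a full power of $[u]_{A_2}$ (or $[v]_{A_2}$), not the $[u]_{A_2}^{1/2}$ allowed by $N(u,v)$. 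The obstruction is genuine: take $g(x)=\sin(\pi x)$ (so $|g|^2$ is band--limited at scale $1$), $k=0$, and $u=A\chi_{[0,\e]}+\chi_{(\e,1]}$ on $[0,1]$; then $\int_0^1|g|^2u_{0,\mathscr D}\sim A\e$ while $\int_0^1|g|^2u\sim A\e^3+1$, and with $A=\e^{-2}$ the ratio is $\sim\e^{-1}\sim[u]_{A_2}$. So the ``delicate step'' you flag is not a matter of handling tails carefully; the inequality $\|S_{[c,d]}f\|_{L^2(u_{k,\mathscr D})}\le C\|S_{[c,d]}f\|_{L^2(u)}$ simply fails with absolute $C$.

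The paper avoids this by working on the spatial side instead of the frequency side. Fix $I\in\mathscr D$ with $|I|=2^{-k}$ and split $f=f\chi_{3I}+f\chi_{{\mathbb R}\setminus 3I}$. For the local piece use the trivial Young bound $\|S_{[t,b]}\|_{L^1\to L^\infty}\le b-a$ to get $|S_{[t,b]}(f\chi_{3I})(y)|\le 3(b-a)2^{-k}Mf(x)$ for any $x\in I$; this is where the factor $2^{-k}(b-a)$ enters. For the far piece, (\ref{ff}) and the smoothness of the Hilbert kernel give, for $x,y\in I$,
\[
|H\mathcal M_{-t}(f\chi_{{\mathbb R}\setminus 3I})(y)|\le H^{\star}\mathcal M_{-t}f(x)+cMf(x).
\]
Combining, one obtains the pointwise bound
\[
|S_{[t,b]}f(y)|\le H^{\star}\mathcal M_{-t}f(x)+H^{\star}\mathcal M_{-b}f(x)+c\big(2^{-k}(b-a)+1\big)Mf(x)
\]
valid for all $x,y\in I$. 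Feeding this into (\ref{sf}) gives $\tfrac{1}{|I|}\int_I|T_{m\chi_{[a,b]}}f|^2\le\bigl(\inf_I G\bigr)^2$ with $G$ built from $H^{\star}\mathcal M_{-t}f$ and $Mf$ only. Hence $\|T_{m\chi_{[a,b]}}f\|_{L^2(u_{k,\mathscr D})}\le\|G\|_{L^2(u)}$ with \emph{no} loss of constant, and a single application of (\ref{two}) to the pair $(u,v)$ yields the $[u,v]_{A_2}^{1/2}$ branch. No intermediate bound on $\|S_{[c,d]}f\|_{L^2(u)}$ is ever needed.
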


\begin{proof}
Let $t\in [a,b)$. Take an arbitrary $I\in {\mathscr D}$ with $|I|=2^{-k}$. Notice that
$$\|S_{[t,b]}f\|_{L^{\infty}}\le (b-a)\|f\|_{L^1}.$$
Therefore, for all $x,y\in I$,
\begin{eqnarray}
&&|S_{[t,b]}f(y)|\le (b-a)\int_{3I}|f|+|S_{[t,b]}(f\chi_{{\mathbb R}\setminus 3I})(y)|\label{s2t}\\
&&\le 3(b-a)2^{-k}Mf(x)+|S_{[t,b]}(f\chi_{{\mathbb R}\setminus 3I})(y)|.\nonumber
\end{eqnarray}

Applying (\ref{ff}) yields
\begin{eqnarray}
|S_{[t,b]}(f\chi_{{\mathbb R}\setminus 3I})(y)|&\le& |H{\mathcal M}_{-t}(f\chi_{{\mathbb R}\setminus 3I})(y)|\label{repest}\\
&+&|H{\mathcal M}_{-b}(f\chi_{{\mathbb R}\setminus 3I})(y)|.\nonumber
\end{eqnarray}
For every $t\in [a,b]$,
\begin{eqnarray}
&&|H{\mathcal M}_{-t}(f\chi_{{\mathbb R}\setminus 3I})(y)-H{\mathcal M}_{-t}(f\chi_{{\mathbb R}\setminus 3I})(x)|\label{hm}\\
&&\le c|I|\int_{{\mathbb R}\setminus 3I}|f(\xi)|\frac{1}{|x-\xi|^2}d\xi\le cMf(x).\nonumber
\end{eqnarray}
Further,
\begin{eqnarray*}
|H{\mathcal M}_{-t}(f\chi_{{\mathbb R}\setminus 3I})(x)|&\le& |H{\mathcal M}_{-t}(f\chi_{{\mathbb R}\setminus [x-|I|/2,x+|I|/2]})(x)|\\
&+&|H{\mathcal M}_{-t}(f\chi_{3I\setminus [x-|I|/2,x+|I|/2]})(x)|\\
&\le& H^{\star}{\mathcal M}_{-t}f(x)+cMf(x),
\end{eqnarray*}
which, combined with (\ref{s2t}), (\ref{repest}) and (\ref{hm}), implies
$$|S_{[t,b]}f(y)|\le H^{\star}{\mathcal M}_{-b}f(x)+H^{\star}{\mathcal M}_{-t}f(x)+(3(b-a)2^{-k}+c)Mf(x).$$

From this and from (\ref{sf}), for all $x,y\in I$ we have
$$|T_{m\chi_{[a,b]}}f(y)|\le cK(m){\mathcal T}(f)(x)+\int_{a}^{b}H^{\star}{\mathcal M}_{-t}f(x)|m'(t)|dt,$$
where
$${\mathcal T}(f)(x)=H^{\star}{\mathcal M}_{-b}f(x)+H^{\star}{\mathcal M}_{-a}f(x)+(2^{-k}(b-a)+1)Mf(x).$$
Therefore,
\begin{equation}\label{kest}
\frac{1}{|I|}\int_{I}|T_{m\chi_{[a,b]}}f|^2\le \inf_I\Big(cK(m){\mathcal T}(f)+\int_{a}^{b}H^{\star}{\mathcal M}_{-t}f|m'(t)|dt\Big)^2.
\end{equation}

Hence, applying Minkowski's inequality and using (\ref{two}), we obtain
\begin{eqnarray*}
&&\|T_{m\chi_{[a,b]}}f\|_{L^2(u_{k,{\mathscr D}})}\le \Big\|cK(m){\mathcal T}(f)+\int_{a}^{b}H^{\star}{\mathcal M}_{-t}f|m'(t)|dt\Big\|_{L^2(u)}\\
&&\le cK(m)\|{\mathcal T}(f)\|_{L^2(u)}+\int_{a}^{b}\|H^{\star}{\mathcal M}_{-t}f\|_{L^2(u)}|m'(t)|dt\\
&&\le cK(m)(2^{-k}(b-a)+1)[u,v]_{A_2}^{1/2}([u]_{A_2}^{1/2}+[v]_{A_2}^{1/2})\|f\|_{L^2(v)}.
\end{eqnarray*}

On the other hand, (\ref{kest}) also implies
$$\|T_{m\chi_{[a,b]}}f\|_{L^2(u_{k,{\mathscr D}})}\le \Big\|cK(m){\mathcal T}(f)+\int_{a}^{b}H^{\star}{\mathcal M}_{-t}f|m'(t)|dt\Big\|_{L^2(u_{k,{\mathscr D}})}.$$
Therefore, by the previous arguments and Lemma \ref{a2},
\begin{eqnarray*}
&&\|T_{m\chi_{[a,b]}}f\|_{L^2(u_{k,{\mathscr D}})}\\
&&\le cK(m)(2^{-k}(b-a)+1)[u_{k,{\mathscr D}},v]_{A_2}^{1/2}([u]_{A_2}^{1/2}+[v]_{A_2}^{1/2})\|f\|_{L^2(v)},
\end{eqnarray*}
which completes the proof.
\end{proof}

\section{Proof of Theorems \ref{lp} and \ref{mm}}
The lower bounds for $\a_p$ and $\b_p$ are explained in the Introduction. Therefore, we are left with establishing the upper bounds.

\begin{proof}[Proof of Theorem \ref{lp}]
By duality, the estimate
\begin{equation}\label{sfl}
\|Sf\|_{L^p(w)}\le C[w]_{A_p}^{\frac{1}{2}\frac{1}{p-1}+\max\big(1,\frac{1}{p-1}\big)}\|f\|_{L^p(w)}
\end{equation}
is equivalent to
$$
\left\|\sum_{k\in {\mathbb Z}}S_{\D_k}\psi_k\right\|_{L^{p'}(\si)}\le C[\si]_{A_{p'}}^{\frac{1}{2}+\max(1,p-1)}\left\|\Big(\sum_{k\in {\mathbb Z}}|\psi_k|^2\Big)^{1/2}\right\|_{L^{p'}(\si)},
$$
where $\si=w^{1-p'}$. Changing here $p'$ by $p$ and $\si$ by $w$, we see that it suffices to prove that
\begin{equation}\label{suf}
\left\|\sum_{k\in {\mathbb Z}}S_{\D_k}\psi_k\right\|_{L^{p}(w)}\le C[w]_{A_{p}}^{\frac{1}{2}+\max\big(1,\frac{1}{p-1}\big)}\left\|\Big(\sum_{k\in {\mathbb Z}}|\psi_k|^2\Big)^{1/2}\right\|_{L^{p}(w)}.
\end{equation}

Applying Theorem \ref{discr} yields
$$
\left\|\sum_{k\in {\mathbb Z}}S_{\D_k}\psi_k\right\|_{L^{p}(w)}\le C[w]_{A_{p}}^{\frac{1}{2}}
\sum_{j=1}^{3}\left\|S_{\f,{\mathscr D}^{(j)}}\Big(\sum_{k\in {\mathbb Z}}S_{\D_k}\psi_k\Big)\right\|_{L^p(w)}.
$$
Therefore, by Theorem \ref{shext}, (\ref{suf}) will follow from
\begin{equation}\label{wf}
\left\|S_{\f,{\mathscr D}}\Big(\sum_{k\in {\mathbb Z}}S_{\D_k}\psi_k\Big)\right\|_{L^2(w)}\le C[w]_{A_2}\left\|\Big(\sum_{k\in {\mathbb Z}}|\psi_k|^2\Big)^{1/2}\right\|_{L^{2}(w)}.
\end{equation}

Using that ${\rm{supp}}\,\widehat{\f_{2^{-k}}}\subset \{\xi:2^{k-1}\le|\xi|\le 2^{k+1}\}$, we have
$$
\Big(\sum_{j\in {\mathbb Z}}S_{\D_j}\psi_j\Big)*\f_{2^{-k}}=(S_{\D_{k-1}}\psi_{k-1}+S_{\D_{k}}\psi_{k})*\f_{2^{-k}},
$$
which implies
\begin{eqnarray*}
&&S_{\f,{\mathscr D}}\Big(\sum_{j\in {\mathbb Z}}S_{\D_j}\psi_j\Big)(x)^2\\
&&=
\sum_{k\in{\mathbb Z}}\sum_{I\in {\mathscr D}:\ell_I=2^{-k}}\left(\frac{1}{|I|}\int_I|(S_{\D_{k-1}}\psi_{k-1}+S_{\D_{k}}\psi_{k})*\f_{2^{-k}}|^2\right)\chi_I(x).
\end{eqnarray*}
Hence, in order to prove (\ref{wf}), it suffices to establish that for every $k\in {\mathbb Z}$,
\begin{equation}\label{es1}
\|(S_{\D_{k-1}}f)*\f_{2^{-k}}\|_{L^2(w_{k,{\mathscr D}})}\le C[w]_{A_2}\|f\|_{L^2(w)}
\end{equation}
and
\begin{equation}\label{es2}
\|(S_{\D_{k}}f)*\f_{2^{-k}}\|_{L^2(w_{k,{\mathscr D}})}\le C[w]_{A_2}\|f\|_{L^2(w)}.
\end{equation}

Since
$$((S_{\D_{k-1}}f)*\f_{2^{-k}})\,\widehat{}\,(\xi)=\widehat\f(2^{-k}\xi)\chi_{\{2^{k-1}\le |\xi|\le 2^k\}}\widehat f(\xi),$$
(\ref{es1}) is an immediate corollary of Lemma \ref{mes} (applied in the case of equal weights). Estimate (\ref{es2}) follows in the same way.
Notice that the constants~$C$ in (\ref{es1}) and (\ref{es2}) can be taken as
$$C=c\left(\|\widehat \f\|_{L^{\infty}}+\int_{1/2\le |\xi|\le 2}|(\widehat \f)'(\xi)|d\xi\right)$$
with some absolute $c>0$.
\end{proof}

\begin{remark}\label{expl}
There is a minor inaccuracy in the proof, namely, applying Theorem \ref{discr}, we have used that $\sum_{k\in {\mathbb Z}}S_{\D_k}\psi_k\in L^p(w)$ as an {\it a priori} assumption. This point can be fixed in several ways.
First, by \cite{K}, $f\in L^p(w)$ implies $Sf\in L^p(w)$ for $w\in A_p$ for all $1<p<\infty$. By duality, this means that $\Big(\sum_{k\in {\mathbb Z}}|\psi_k|^2\Big)^{1/2}\in L^p(w)$ implies $\sum_{k\in {\mathbb Z}}S_{\D_k}\psi_k\in L^p(w)$.

However, one can avoid the use of \cite{K} as follows. Defining
$$S_Nf=\left(\sum_{k=-N}^N|S_{\D_k}f|^2\right)^{1/2},$$
we have that (\ref{sfl}) with $S_Nf$ instead of $Sf$ is equivalent to (\ref{suf}) with $\sum_{k=-N}^{N}S_{\D_k}\psi_k$ on the left-hand side. But the fact that $\sum_{k=-N}^{N}S_{\D_k}\psi_k\in L^p(w)$
follows immediately from (\ref{ff}). The rest of the proof is exactly the same, and we obtain (\ref{sfl}) with $S_Nf$ instead of $Sf$ with the corresponding constant independent of $N$. Letting $N\to \infty$ yields
the desired bound for $S$.
\end{remark}

\begin{remark}\label{a2b}
Theorem \ref{lp} in the case $p=2$ says that
$$\|S\|_{L^2(w)\to L^2(w)}\le C[w]_{A_2}^{3/2}.$$
From this, by Theorem \ref{shext},
$$\|S\|_{L^p\to L^p}\le C\|M\|_{L^p\to L^p}^{3/2}\quad(1<p\le 2).$$
Since $\|M\|_{L^p\to L^p}\simeq \frac{1}{p-1}$ for $1<p\le 2$, we obtain the sharp upper bound
$$\|S\|_{L^p\to L^p}\le \frac{C}{(p-1)^{3/2}}\quad(1<p\le 2)$$
found by J. Bourgain \cite{B}.
\end{remark}

\begin{proof}[Proof of Theorem \ref{mm}]
Using the fact that
$$\|T_m\|_{L^p(w)\to L^p(w)}=\|T_m\|_{L^{p'}(\si)\to L^{p'}(\si)}$$
and $[\si]_{A_{p'}}=[w]_{A_p}^{\frac{1}{p-1}}$,
it suffices to prove that
\begin{equation}\label{stp}
\|T_m\|_{L^p(w)\to L^p(w)}\le C_{p,m}[w]_{A_p}^{\frac{1}{2}+\frac{3}{2}\frac{1}{p-1}}\quad(1<p\le 2).
\end{equation}
By Theorems \ref{discr} and \ref{shext}, (\ref{stp}) will follow from
\begin{equation}\label{wff}
\|S_{\f,{\mathscr D}}(T_mf)\|_{L^2(w)}\le C_m[w]_{A_2}^{3/2}\|f\|_{L^2(w)}.
\end{equation}

Notice that
$$
\|S_{\f,{\mathscr D}}(T_mf)\|_{L^2(w)}=\left(\sum_{k\in {\mathbb Z}}\int_{{\mathbb R}}|(T_mf)*\f_{2^{-k}}|^2w_{k,{\mathscr D}}dx\right)^{1/2}.
$$
Therefore, by duality, (\ref{wff}) is equivalent to
$$\left\|\sum_{k\in {\mathbb Z}}(T_m\psi_k)*\f_{2^{-k}}\right\|_{L^2(w^{-1})}\le C_m[w]_{A_2}^{3/2}\left(\sum_{k\in {\mathbb Z}}\int_{{\mathbb R}}|\psi_k|^2(w_{k,{\mathscr D}})^{-1}dx\right)^{1/2}.$$

Applying Theorem \ref{discr} again, we see that the question is reduced to the estimate
\begin{eqnarray}
&&\left(\sum_{k\in {\mathbb Z}}\|(\sum_{j\in {\mathbb Z}}(T_m\psi_j)*\f_{2^{-j}})*\f_{2^{-k}}\|^2_{L^2((w^{-1})_{k,{\mathscr D}'})}\right)^{1/2}\label{rte}\\
&&\le C_m[w]_{A_2}\left(\sum_{k\in {\mathbb Z}}\|\psi_k\|_{L^2((w_{k,{\mathscr D}})^{-1})}^2\right)^{1/2}\nonumber
\end{eqnarray}
for some dyadic lattices ${\mathscr D}$ and ${\mathscr D}'$.

Since
$$
(\sum_{j\in {\mathbb Z}}(T_m\psi_j)*\f_{2^{-j}})*\f_{2^{-k}}=\sum_{j=k-1}^{k+1}(T_m\psi_j)*\f_{2^{-j}}*\f_{2^{-k}},
$$
in order to prove (\ref{rte}), it suffices to show that for every $k\in {\mathbb Z}$ and every $j=k-1,k,k+1$,
\begin{equation}\label{stst}
\|(T_mf)*\f_{2^{-j}}*\f_{2^{-k}}\|_{L^2((w^{-1})_{k,{\mathscr D}'})}\le C_m[w]_{A_2}\|f\|_{L^2((w_{k,{\mathscr D}})^{-1})}.
\end{equation}

By Lemma \ref{a2},
$$
[(w^{-1})_{k,{\mathscr D}'},(w_{k,{\mathscr D}})^{-1})]_{A_2}^{1/2}\big([(w^{-1})_{k,{\mathscr D}'}]_{A_2}^{1/2}+[(w_{k,{\mathscr D}})^{-1})]_{A_2}^{1/2}\big)\le c[w]_{A_2}.
$$
From this and from Lemma \ref{mes} we obtain (\ref{stst}) with
$$C_m=cC_{\f}\Big(\|m\|_{L^{\infty}}+\sup_{k\in {\mathbb Z}}\int_{\D_k}|m'(t)|dt\Big),$$
which completes the proof.
\end{proof}

\begin{remark}\label{jmm}
As in Remark \ref{expl}, it is not difficult to justify the use of Theorem \ref{discr}. We omit the details.
\end{remark}

\section{Concluding remarks}
\subsection{On the sharpness of $\a_p$ and $\b_p$} The extrapolation principle explained in the Introduction says
that if $\xi_p$ is the best possible exponent in $\|T\|_{L^p(w)\to L^p(w)}\le C[w]_{A_p}^{\xi_p}$, then
$\xi_p\ge\max(\ga_2,\frac{\ga_1}{p-1})$, where $\ga_1$ and~$\ga_2$ are the constants appearing in the
endpoint asymptotic relations for $\|T\|_{L^p\to L^p}$. In fact, for many particular operators we have that
$\xi_p=\max(\ga_2,\frac{\ga_1}{p-1})$.

Therefore, it is plausible that the upper bounds for $\a_p$ and $\b_p$ from Theorems \ref{lp} and \ref{mm}
are not sharp for $p>2$ and $1<p<\infty$, respectively, and it is natural to make the following.

\begin{con}\label{clp}
The best possible exponent $\a_p$ in
$$\|S\|_{L^p(w)\to L^p(w)}\le C_p[w]_{A_p}^{\a_p}$$
is
$$\a_p=\max\left(1,\frac{3}{2}\frac{1}{p-1}\right)\quad(1<p<\infty).$$
\end{con}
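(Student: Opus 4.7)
Theorem~\ref{lp} already establishes the conjectured exponent $\alpha_p = \tfrac{3}{2(p-1)}$ on the range $1<p\le 2$, so the remaining task is to sharpen the upper bound $\tfrac{1}{2(p-1)}+1$ to $\max\bigl(1,\tfrac{3}{2(p-1)}\bigr)$ for $p>2$. A convenient reduction is that the entire conjecture for $p>2$ is equivalent to a \emph{single} linear $A_p$ endpoint bound at the threshold exponent $p_0=5/2$: the plan is to prove
$$\|S\|_{L^{5/2}(w)\to L^{5/2}(w)}\le C\,[w]_{A_{5/2}}.$$
Given this, Theorem~\ref{shext} applied downward from $p_0=5/2$ with $N(t)=t$ yields $\alpha_p\le \tfrac{p_0-1}{p-1}=\tfrac{3}{2(p-1)}$ for every $p\in(2,5/2)$, matching the conjecture and consistent with the known sharp value $\alpha_2=3/2$; upward extrapolation from the same endpoint gives $\alpha_p\le 1$ for all $p\ge 5/2$. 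The whole problem thus collapses onto one endpoint estimate.

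To establish $\alpha_{5/2}\le 1$, the route I would pursue is a bilinear sparse domination of $S$ adapted to the dyadic frequency structure $\{\Delta_k\}$, of the shape
$$\bigl|\langle Sf,g\rangle\bigr|\le C\sum_{Q\in\mathcal{S}}|Q|\,\langle|f|\rangle_{r,Q}\,\langle|g|\rangle_{s,Q}$$
with $r$ close to $1$ and $s\le 2$, reflecting the square structure of $S$; standard sparse-to-weight machinery would then produce the required linear $A_{5/2}$ bound. A complementary route is to revisit the duality step in the proof of Theorem~\ref{lp}: the excess $\tfrac{1}{2(p-1)}$ originates entirely from the $[w]_{A_p}^{1/2}$ factor of Theorem~\ref{discr}, so one might try to replace that theorem by a version of Wilson's inequality tailored to $p>2$ that costs $[w]_{A_\infty}^{1/p}$ in place of $[w]_{A_p}^{1/2}$, exploiting the $A_\infty$ refinement noted in Remark~\ref{Ainf} together with the Hyt\"onen--P\'erez-type bounds for $M$.

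The main obstacle is the sparse bound (or its Wilson-type substitute) itself. The rough frequency projections $\chi_{\Delta_k}$ place $S$ outside the scope of the sparse-domination theorems available for smooth Littlewood--Paley operators, and the situation appears structurally parallel to the still-open rough homogeneous singular integral problem mentioned in the Introduction. Crucially, the $\tfrac{1}{2(p-1)}$ loss cannot be removed by any internal manipulation of the paper's scheme---the $L^2(w)$ estimate (\ref{wf}) driving it is already sharp at $p=2$---so a genuinely new pointwise or bilinear bound capturing the Calder\'on--Zygmund-like behavior of $S$ at large $p$ is required. This is presumably why the statement is offered as a conjecture rather than a theorem.
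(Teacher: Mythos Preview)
This statement is a \emph{Conjecture} in the paper and is not proved there; your proposal is likewise not a proof but a strategic sketch, and you correctly acknowledge as much in your final paragraph. Your reduction of the whole problem to the single endpoint estimate $\|S\|_{L^{5/2}(w)\to L^{5/2}(w)}\le C[w]_{A_{5/2}}$ via Theorem~\ref{shext} is exactly the reduction the paper records immediately after stating the conjecture, so on that point you and the author are in full agreement.

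On the sparse-domination route, the paper is somewhat more cautious than you. The author notes that the plausible bound
\[
|\langle Sf,g\rangle|\le \frac{C}{(r-1)^{1/2}}\sum_{Q\in\mathcal S}\langle f\rangle_{r,Q}\langle g\rangle_{1,Q}|Q|\quad(1<r\le 2)
\]
would only reproduce the upper bound already obtained in Theorem~\ref{lp}, and explicitly says it is unclear what sparse form for $S$ would yield the conjecture. Your assertion that ``standard sparse-to-weight machinery would then produce the required linear $A_{5/2}$ bound'' therefore skips over the genuine difficulty: you have not identified exponents $(r,s)$ and a constant for which the sparse inequality both \emph{holds} for $S$ and \emph{implies} $\alpha_{5/2}\le 1$. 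Your second suggestion---replacing the $[w]_{A_p}^{1/2}$ cost in Theorem~\ref{discr} by something like $[w]_{A_\infty}^{1/p}$---goes beyond what the paper discusses, but you supply no mechanism for achieving it, and the Chang--Wilson--Wolff argument underlying Theorem~\ref{wil} does not obviously admit such a $p$-dependent refinement. In short, your proposal accurately locates the gap and matches the paper's own diagnosis, but does not close it; this is consistent with the statement being offered as a conjecture.
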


\begin{con}\label{cmm}
The best possible exponent $\b_p$ in
$$\|T_m\|_{L^p(w)\to L^p(w)}\le C_{p,m}[w]_{A_p}^{\a_p}$$
is
$$\b_p=\frac{3}{2}\max\left(1,\frac{1}{p-1}\right)\quad(1<p<\infty).$$
\end{con}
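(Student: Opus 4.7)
The strategy is to reduce Conjecture \ref{cmm} to the sharp $L^2$ endpoint
\begin{equation}\label{goalL2}
\|T_m\|_{L^2(w)\to L^2(w)}\le C_m[w]_{A_2}^{3/2},
\end{equation}
from which the full conjectured exponent would follow by sharp extrapolation. Indeed, Theorem \ref{shext} applied with $p_0=2$ and $N(t)=Ct^{3/2}$ yields
$$\|T_m\|_{L^p(w)\to L^p(w)}\le C_{p,m}[w]_{A_p}^{\frac{3}{2}\max(1,\,1/(p-1))}\quad(1<p<\infty),$$
which is precisely the conjectured $\b_p$.

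The proof of Theorem \ref{mm} already delivers $\|T_m\|_{L^2(w)\to L^2(w)}\le C_m[w]_{A_2}^{2}$, so to prove (\ref{goalL2}) it suffices to save an extra factor of $[w]_{A_2}^{1/2}$. Tracking that argument, the exponent $2$ is produced by two successive uses of Theorem \ref{discr}: once to pass from $T_mf$ to $S_{\f,\mathscr{D}}(T_mf)$, and a second time, after dualizing (\ref{wff}), to pass from $\sum_k(T_m\psi_k)*\f_{2^{-k}}$ to its own square function. Each application contributes $[w]_{A_2}^{1/2}$, while Lemma \ref{mes} supplies the remaining $[w]_{A_2}$. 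My plan is therefore to strengthen the intermediate bound (\ref{wff}) to
\begin{equation}\label{wff2}
\|S_{\f,\mathscr{D}}(T_m f)\|_{L^2(w)}\le C_m[w]_{A_2}\|f\|_{L^2(w)},
\end{equation}
so that a \emph{single} application of Theorem \ref{discr} already produces (\ref{goalL2}). The natural attempt is to write $(T_mf)*\f_{2^{-k}}=T_{m_k}(S_{\D_{k-1}}f+S_{\D_k}f)$, with $m_k=m\widehat\f(2^{-k}\cdot)$ satisfying $K(m_k)\le CK(m)$, apply Lemma \ref{mes} with $u=v=w$ at each scale $k$ on the interval $\{2^{k-1}\le|\xi|\le 2^{k+1}\}$, and sum in $k$.

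The main obstacle is precisely the summation step. Scale-by-scale application of Lemma \ref{mes} leads, after squaring and summing, to the quantity $\sum_k\|S_{\D_{k-1}}f+S_{\D_k}f\|^2_{L^2(w)}\le 4\|Sf\|^2_{L^2(w)}$ on the right-hand side, and recovering $\|f\|^2_{L^2(w)}$ from here via Theorem \ref{lp} costs an additional $[w]_{A_2}^{3/2}$ and destroys the hoped-for gain. Thus (\ref{wff2}) cannot be reached by any naive aggregation of Lemma \ref{mes}; what is needed is a genuinely scale-coupled estimate. One plausible route is a vector-valued strengthening of Lemma \ref{mes} outputting a true sequence-space norm on the right, bypassing the weighted Littlewood-Paley equivalence. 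An alternative is a direct sparse $(r,1)$-domination of $T_m$ for some $r>1$ close to $1$, whose sharp $L^2(w)$ bound lands at $[w]_{A_2}^{3/2}$; establishing such a domination would require exploiting the Marcinkiewicz condition $\sup_k\int_{\D_k}|m'(t)|\,dt<\infty$ directly at the spatial level, through a Calder\'on--Zygmund-type decomposition adapted to the lacunary structure $\{\D_k\}$. Producing this scale-coupled estimate appears to be the essential new idea needed for the conjecture.
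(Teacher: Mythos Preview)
This statement is a \emph{conjecture} in the paper, and the paper does not prove it. Your reduction to the $L^2$ endpoint \eqref{goalL2} via Theorem~\ref{shext} is exactly the observation the paper makes in the paragraph immediately following Conjecture~\ref{cmm}, so on that point you and the paper agree completely.

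Your subsequent analysis is an accurate post-mortem of why the method of Theorem~\ref{mm} falls short of \eqref{goalL2}: you correctly trace the exponent $2$ to the two successive applications of Theorem~\ref{discr}, and your attempt to replace \eqref{wff} by the stronger \eqref{wff2} is the natural move. You also correctly identify the obstruction---summing the scale-wise outputs of Lemma~\ref{mes} lands on $\|Sf\|_{L^2(w)}$, and converting that back to $\|f\|_{L^2(w)}$ via Theorem~\ref{lp} costs $[w]_{A_2}^{3/2}$, wiping out the saving. All of this is sound, but it is a diagnosis, not a proof: your own final paragraph concedes that the missing ``scale-coupled estimate'' is the essential new idea still required. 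That conclusion is consistent with the paper, which leaves the conjecture open.

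One small point of comparison: the paper's own speculative route (Section~5.2) is the $(r,r)$ sparse bound \eqref{ssb} with constant $(r-1)^{-1/2}$, which it asserts would imply Conjecture~\ref{cmm}. You instead float an $(r,1)$ sparse domination; be careful here, since an $(r,1)$ form with the same blow-up does not obviously optimize to the exponent $3/2$ at $p=2$ in the way the symmetric $(r,r)$ form does.
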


Observe that by Theorem \ref{shext}, in order to establish Conjectures \ref{clp} and \ref{cmm}, it suffices to show that
$$\|S\|_{L^{5/2}(w)\to L^{5/2}(w)}\le C[w]_{A_{5/2}}\quad\text{and}\quad
\|T_m\|_{L^2(w)\to L^2(w)}\le C_{m}[w]_{A_2}^{3/2},
$$
respectively.

\subsection{Sparse bounds for $S$ and $T_m$?} A family of cubes ${\mathcal S}$ is called sparse if there exist $0<\eta<1$ and a family of pairwise disjoint sets $\{E_Q\}_{Q\in {\mathcal S}}$
such that $E_Q\subset Q$ and $|E_Q|\ge\eta |Q|$ for all $Q\in {\mathcal S}$. By a sparse bound for a given operator $T$ we mean an estimate of the form
$$|\langle Tf,g \rangle|\le C\sum_{Q\in {\mathcal S}}\langle f\rangle_{r,Q}\langle g\rangle_{s,Q}|Q|,$$
with suitable $1\le r,s<\infty$, where $\langle f\rangle_{p,Q}=\l|f|^p\r_Q^{1/p},$
and ${\mathcal S}$ is a sparse family.

Sparse bounds have become a powerful tool for obtaining sharp quantitative weighted estimates in recent years (see, e.g., \cite{BFP,CCPO,L2}). Therefore it would be natural to try to attack Conjectures
\ref{clp} and \ref{cmm} by means of the corresponding sparse bounds for $S$ and $T_m$.

At this point, we mention that it is not clear to us what is the sparse bound for $S$ leading to Conjecture \ref{clp}.
For example, it is plausible that~$S$ satisfies 
$$|\langle Sf,g \rangle|\le \frac{C}{(r-1)^{1/2}}\sum_{Q\in {\mathcal S}}\langle f\rangle_{r,Q}\langle g\rangle_{1,Q}|Q|\quad(1<r\le 2)$$
but one can show that this estimate leads to the same upper bound for $\a_p$ as obtained in Theorem \ref{lp}.

Contrary to this, the sparse bound
\begin{equation}\label{ssb}
|\langle T_mf,g \rangle|\le \frac{C}{(r-1)^{1/2}}\sum_{Q\in {\mathcal S}}\langle f\rangle_{r,Q}\langle g\rangle_{r,Q}|Q|\quad(1<r\le 2)
\end{equation}
would imply Conjecture \ref{cmm}. The technique developed in \cite{TW} probably may play an important role in establishing (\ref{ssb}).

\end{document}